\newcommand{\comment}[1]{}
\theoremstyle{theorem}
    \newtheorem{theorem}{Theorem}
    \newtheorem{lemma}[theorem]{Lemma}
\theoremstyle{definition} 
    \newtheorem{remark}[theorem]{Remark}
    \newtheorem{example}[theorem]{Example}
    \newtheorem{exercise}[theorem]{Exercise}
\def\newblock{\hskip .11em plus .33em minus .07em}
\def\l{\left}
\def\r{\right}
\def\<{\langle}
\def\>{\rangle}
\newcommand{\one}{{\mathbf 1}}
\newcommand{\E}{\mbox{\bf E}}
\def\bar{\overline}
\def\P{{\bf P}}
\newcommand\tr{{\mbox{tr}}}
\newcommand\mnote[1]{} 
\newcommand\be{\begin{equation*}}
\newcommand\ee{\end{equation*}}
\newcommand\ben{\begin{equation}}
\newcommand\een{\end{equation}}
\newcommand\bes{\begin{eqnarray*}}
\newcommand\ees{\end{eqnarray*}}
\newcommand\bex{\begin{exercise}}
\newcommand\eex{\end{exercise}}
\newcommand\beg{\begin{example}}
\newcommand\eeg{\end{example}}
\newcommand\benu{\begin{enumerate}}
\newcommand\eenu{\end{enumerate}}
\newcommand\beit{\begin{itemize}}
\newcommand\eeit{\end{itemize}}
\newcommand\berk{\begin{remark}}
\newcommand\eerk{\end{remark}}
\newcommand\bdefn{\begin{defintion}}
\newcommand\edefn{\end{definition}}
\newcommand\bthm{\begin{theorem}}
\newcommand\ethm{\end{theorem}}
\newcommand\bprf{\begin{proof}}
\newcommand\eprf{\end{proof}}
\newcommand\blem{\begin{lemma}}
\newcommand\elem{\end{lemma}}
\newcommand{\sm}{{\raise0.3ex\hbox{$\scriptstyle \setminus$}}}
\def\l{\left}
\def\r{\right}
\def\CHI{\mathchoice%
{\raise2pt\hbox{$\chi$}}%
{\raise2pt\hbox{$\chi$}}%
{\raise1.3pt\hbox{$\scriptstyle\chi$}}%
{\raise0.8pt\hbox{$\scriptscriptstyle\chi$}}}
\def\smalloplus{\raise1pt\hbox{$\,\scriptstyle \oplus\;$}}
\begin{document}
\bibliographystyle{plain}

\title[]{Determinantal point processes in the plane from products of random matrices}

\author{Kartick Adhikari}

\address{Department of Mathematics,\\
        Indian Institute of Science,\\
        Bangalore 560012, India}

\email{kartickmath@math.iisc.ernet.in}

\author{Nanda Kishore Reddy}

\address{Department of Mathematics,\\
        Indian Institute of Science,\\
        Bangalore 560012, India}

\email{kishore11@math.iisc.ernet.in}

\author{Tulasi Ram Reddy}

\address{Department of Mathematics,\\
        Indian Institute of Science,\\
        Bangalore 560012, India}

\email{tulasi10@math.iisc.ernet.in}

\author{Koushik Saha}

\address{Department of Mathematics,\\
        University of California Davis\\
        Davis, CA 95616}

\email{ksaha@math.ucdavis.edu}

\thanks{Partially supported by UGC
(under SAP-DSA Phase IV). Research of Koushik Saha is partially supported by INSPIRE fellowship, Department of Science and Technology, Government of India. Research of Nanda Kishore reddy and Tulasi Ram Reddy is supported by CSIR-SPM fellowship, CSIR, Government of India. }

\date{\today}
\maketitle

\begin{abstract}
We show the density of eigenvalues for three classes of random
matrix ensembles is determinantal. First we derive the density of eigenvalues of product of $k$ independent $n\times n$ matrices with i.i.d. complex Gaussian entries with a few of matrices being inverted. In second example
we calculate the same for (compatible) product of rectangular matrices with i.i.d. Gaussian entries and in last example we calculate for product of independent truncated unitary random matrices. We derive exact expressions for limiting expected empirical spectral distributions of above mentioned ensembles.

\end{abstract}
\noindent\textbf{Keywords:} Determinantal point process, eigenvalues, empirical spectral distribution, limiting spectral distribution, Haar measure, QR decomposition, random matrix, RQ decomposition, Generalized Schur decomposition,   unitary matrix, wedge product.
\section{Introduction and Main results}
In this article we show that the eigenvalues of certain classes of random matrix ensembles form a determinantal point process on the complex plane. In particular, we have obtained the  density of the eigenvalues of these matrix ensembles.

The first  well known example  of determinantal point process from the field of random matrices  is \textit{circular unitary ensemble}, which  is the set of eigenvalues of a random unitary matrix sampled from the Haar measure on the set of all $n\times n$ unitary matrices, $\mathcal U(n)$.   Dyson \cite{dyson} introduced this ensemble and showed that the circular unitary ensemble forms a determiantal point process on $S^1$.

Ginibre \cite{ginibre} introduced three ensembles of matrices  with i.i.d. real, complex and quaternion  Gaussian entries respectively without imposing a Hermitian condition. He showed that the eigenvalues of an $n\times n$ matrix with  i.i.d. standard complex Gaussian entries  form a determinantal process on the complex plane.

\.{Z}yczkowski and  Sommers   \cite{sommer} generalised the result of Dyson \cite{dyson}. Let $U$ be a matrix drawn from the Haar distribution on  $\mathcal U(n)$. \.{Z}yczkowski and  Sommers    showed in \cite{sommer} that the eigenvalues of the left uppermost $m\times m$  block of  $U$ (where $m < n$) form a determinantal point process on $\mathbb D=\{z \in\mathbb C: |z|\leq 1\}$. They found the exact distribution of the eigenvalues and from there it follows that they form a determinantal point process.

Krishnapur \cite{manjunath} showed that the eigenvalues of $A^{-1}B$ form a determinantal point process on the complex plane when $A$ and $B$ are independent random matrices with i.i.d. standard complex Gaussian entries. In random matrix literature this matrix ensemble $A^{-1}B$ is known as \textit{spherical ensemble}.

 Akemann and Burda \cite{akemann} have derived the eigenvalue density for the product of $k$ independent $n\times n$ matrices with i.i.d. complex Gaussian entries.
 In this case the joint probability distribution of the eigenvalues of the product matrix is found to be given by a determinantal point process as in the case of Ginibre,
 but with a complicated weight given by a Meijer $G$-function depending on $k$.

Their derivation hinges on the generalized Schur decomposition for matrices and the method of orthogonal polynomials.
They  computed all eigenvalue density correlation functions exactly for finite $n$ and fixed $k$.
A similar kind of study has been done  on product of independent matrices with quaternion Gaussian entries  in \cite{ipsen}.

In a successive work, Akemann,  Keiburg and Wei \cite{akemann1}  showed that the singular values of  product of $k$ independent  random matrices with i.i.d. complex Gaussian entries form a determinantal point process on real line.
This generalises the classical Wishart-Laguerre Ensemble with $k=1$.
In a very recent work by Akemann, Ipsen and Keiburg \cite{akemann2},  a similar kind of result is proved for singular values of product of independent rectangular matrices with i.i.d. complex Gaussian entries. Here also the correlation functions are given by a determinantal point process, where the kernel can be expressed in terms of Meijer G-functions. For a detailed  discussion on determinantal point processes the reader can look at the survey articles \cite{manjusurvey},\cite{soshnikovsurvey}.

Now following the work of Krishnapur \cite{manjunath} on spherical ensembles and the work of Akemann and Burda \cite{akemann} on the product of $k$ independent  $n\times n$ Ginibre matrices, it is a natural question to ask, what can be said about the eigenvalues of  product of $k$ independent Ginibre matrices when a few of them are inverted? More precisely, do the eigenvalues of $A=A_1^{\epsilon_1}A_2^{\epsilon_2}
\cdots A_k^{\epsilon_k}$ form a determinantal point process, where each $\epsilon_i$
is $+1$ or $-1$ and $A_1,
A_2,\ldots, A_k$ are independent matrices with i.i.d. standard complex Gaussian entries? 
The answer is yes and the following theorem, our first result, answers it in detail.
\begin{theorem}\label{thm:result1}
Let $A_1,A_2,\ldots,A_k$ be independent $n\times n$ random matrices with i.i.d. standard complex Gaussian entries.
Then the eigenvalues of  $A=A_1^{\epsilon_1}A_2^{\epsilon_2}
\ldots A_k^{\epsilon_k}$, where each $\epsilon_i$
is  $+1$ or $-1$, form a determinantal point process. Equivalently, one can say that the vector of
eigenvalues of $A$ has density
(with respect to Lebesgue measure on $\mathbb{C}^{n}$) proportional to
$$
\prod_{\ell=1}^{n}\omega(z_{\ell})\prod_{i<j}^{n}|z_i-z_j|^2
$$
with a weight function $ \omega(z)$ , where
\begin{equation}\label{weight1}
|dz|^2\omega(z)=\int_{x_1^{\epsilon_1}\cdots x_{k}^{\epsilon_k}
=z}e^{\l(-\sum_{j=1}^{k}|x_{j}|^2\r)}
\prod_{j=1}^{k}|x_{j}|^{(1-\epsilon_j)(n-1)}\prod_{j=1}^k|dx_{j}|^2.
\end{equation}

\end{theorem}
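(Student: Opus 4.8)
The plan is to reduce the product matrix to a simultaneously triangularizable family via a generalized Schur (QZ-type) decomposition, so that the eigenvalues become explicit functions of the diagonal entries, and then push the Gaussian densities through the change of variables. First I would write $A = A_1^{\epsilon_1}\cdots A_k^{\epsilon_k}$ and, for each index $i$ with $\epsilon_i=+1$, apply a unitary $U_{i-1}$ on the left and $U_i$ on the right to bring $U_{i-1}^* A_i U_i$ to upper triangular form; for each $i$ with $\epsilon_i=-1$, I would instead arrange $U_{i-1}^* A_i U_i$ (equivalently $U_i^* A_i^{-1} U_{i-1}$) to be upper triangular, choosing the unitaries recursively around the cycle $U_0 = U_k$. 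Concretely this is the statement that $k$ independent Ginibre matrices $A_1,\dots,A_k$ can be written as $A_i = U_{i-1} T_i^{(\epsilon_i)} U_i^*$ where each $T_i^{(\epsilon_i)}$ is upper triangular (reading the inverted factors in the appropriate orientation), with $U_0=\dots=U_k$ identified cyclically. Then $A = U_0\, T_1^{\epsilon_1}\cdots T_k^{\epsilon_k}\, U_0^*$ is similar to a product of triangular matrices, hence its eigenvalues are $\prod_{i} (t_i^{(\ell)})^{\epsilon_i}$ for $\ell = 1,\dots,n$, where $t_i^{(\ell)}$ denotes the $\ell$-th diagonal entry of $T_i$.

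Next I would compute the Jacobian of the map $(A_1,\dots,A_k) \mapsto (U_0, \{T_i\})$. Writing each $A_i = dA_i$ in terms of $dU$'s and $dT_i$'s and taking wedge products, the off-diagonal triangular entries and the unitary directions contribute a factor that — after integrating out the $U$'s against Haar measure and the strictly-upper-triangular entries against their Gaussian weights — produces the Vandermonde-squared factor $\prod_{i<j}|z_i - z_j|^2$ in the diagonal eigenvalue variables, exactly as in the Schur-decomposition derivation of the Ginibre and Akemann--Burda ensembles. For an inverted factor $A_i^{-1}$, the relation $d(A_i^{-1}) = -A_i^{-1}(dA_i)A_i^{-1}$ contributes $|\det A_i|^{-2n} = \prod_\ell |t_i^{(\ell)}|^{-2n}$ to the Jacobian; combined with the Gaussian weight $e^{-\operatorname{tr} A_i^* A_i}$ this is what generates the exponents $(1-\epsilon_j)(n-1)$ on $|x_j|$ in (\ref{weight1}) — the value is $0$ when $\epsilon_j=+1$ and $2(n-1)$ when $\epsilon_j=-1$, matching the power of $|x_j|^{-2}$ left over after accounting for the $2n$ from the Jacobian against the $2$ coming from the Vandermonde contribution of that coordinate. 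The diagonal entries $t_i^{(\ell)}$ of the different $T_i$'s decouple across $i$ once the off-diagonal integrations are done, so the joint density of the $n$-tuples $(t_1^{(\ell)},\dots,t_k^{(\ell)})_{\ell=1}^n$ factors over $\ell$ into identical single-site densities.

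Finally, I would change variables from the diagonal tuples $(t_1^{(\ell)},\dots,t_k^{(\ell)})$ to the eigenvalue $z_\ell = (t_1^{(\ell)})^{\epsilon_1}\cdots (t_k^{(\ell)})^{\epsilon_k}$ together with $k-1$ auxiliary coordinates, and integrate out the latter. By construction the remaining single-site density of $z_\ell$, after stripping the Vandermonde factor, is precisely $\omega(z_\ell)$, and the defining integral is the fibre integral over the level set $\{x_1^{\epsilon_1}\cdots x_k^{\epsilon_k} = z\}$ with the Gaussian weight and the $|x_j|^{(1-\epsilon_j)(n-1)}$ factors carried along from the Jacobian computation — i.e.\ formula (\ref{weight1}). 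I expect the main obstacle to be the careful bookkeeping of the wedge-product Jacobian when inverted factors are interleaved with non-inverted ones: one must verify that the generalized Schur decomposition can indeed be performed consistently around the cycle with a single shared unitary, and that the orientation chosen for each inverted block is compatible, so that the off-diagonal entries of every $T_i$ remain independent complex Gaussians (up to the determinant-induced weight) and the $U$-integration genuinely produces the clean Vandermonde factor rather than some residual coupling. Once that Jacobian identity is established, the factorization over $\ell$ and the identification of $\omega$ are routine.
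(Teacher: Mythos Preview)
Your proposal follows essentially the same route as the paper: generalized Schur decomposition with cyclically shared unitaries $A_i^{\epsilon_i}=U_iS_i^{\epsilon_i}U_{i+1}^*$, the wedge-product Jacobian (including the $d(A^{-1})=-A^{-1}(dA)A^{-1}$ correction for inverted factors), and then integration of the Haar and strictly-upper-triangular variables to leave the Vandermonde times a product weight. One bookkeeping correction: the volume-form Jacobian of $A\mapsto A^{-1}$ on $n\times n$ complex matrices is $|\det A|^{-4n}$ (not $|\det A|^{-2n}$), and the exponent $(1-\epsilon_j)(n-1)$ actually emerges as the difference $4n-2(n+1)=2(n-1)$ between the full-matrix and the upper-triangular inversion Jacobians, not from any ``Vandermonde contribution.''
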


We write
$$
|dz|^2\omega(z)=\int_{h(x_1,x_2,\ldots, x_k)=z}g(x_1,x_2,\ldots,x_k)|dx_1|^2|dx_2|^2\cdots |dx_k|^2
$$
if
$$
\int f(z)\omega(z)|dz|^2=\int f(h(x_1,x_2,\ldots ,x_k))g(x_1,x_2,\ldots,x_k)|dx_1|^2|dx_2|^2\cdots |dx_k|^2
$$
for all $f:\mathbb{C}\to \mathbb{C}$ integrable function.

In our next result we deal with the eigenvalues of product of $k$ independent rectangular matrices with i.i.d. complex Gaussian entries.
This is a generalization of result of Osborn \cite{osborn}, where he derives the eigenvalues density of product of two rectangular matrices.
Here also the eigenvalues form a determinantal point process. For a related work on the singular values on product of rectangular matrices see
\cite{akemann2}. The following theorem states our next result.

\begin{theorem}\label{thm:rectanguular}
Let $A_1,\; A_2,\ldots,\;A_k$ be independent rectangular matrices of dimension $n_i\times n_{i+1}$ for $i= 1,2,\ldots,k$,
with $n_{k+1}=n_1=\min\{n_1,n_2,\ldots,n_k\}$ and with i.i.d. standard complex Gaussian entries. Then the eigenvalues $z_1,z_2,\ldots,z_{n_1}$
 of
$A=A_1A_2\cdots A_k$ form a determinantal
point process on the complex plane.  Equivalently, one can say that the vector of eigenvalues of
$A=A_1A_2\cdots A_k$ has density
(with respect to Lebesgue measure on $\mathbb{C}^{n_1}$) proportional to
$$
\prod_{\ell=1}^{n_1}\omega(z_{\ell})\prod_{i<j}^{n_1}|z_i-z_j|^2
$$
with a weight function
$$
|dz|^2\omega(z)=\int_{x_{1}\cdots x_{k}=z}e^{-\sum_{j=1}^{k}|x_{j}|^2}
\prod_{j=1}^{k}|x_{j}|^{2(n_j-n_1)}\prod_{j=1}^k|dx_{j}|^2.
$$
\end{theorem}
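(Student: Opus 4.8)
The plan is to follow the architecture of the proof of Theorem~\ref{thm:result1}, whose two engines are a generalized (periodic) Schur decomposition of a $k$-tuple of matrices and a wedge-product computation of the associated Jacobian; only the Schur step has to be rebuilt for rectangular matrices, and the case $k=2$ is Osborn's computation \cite{osborn}. Concretely, I would first show that almost surely there exist unitary matrices $U_i\in\mathcal U(n_i)$, $i=1,\dots,k$, with the cyclic convention $U_{k+1}=U_1$ (legitimate because $n_{k+1}=n_1$), such that $U_i^*A_iU_{i+1}=T_i$, where $T_i$ is $n_i\times n_{i+1}$ with its bottom-left $(n_i-n_1)\times n_1$ block equal to $0$ and its top-left $n_1\times n_1$ block upper triangular (the remaining entries unconstrained). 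Existence is by deflation: take a unit eigenvector $v^{(1)}\in\mathbb C^{n_1}$ of $A=A_1\cdots A_k$; set $v^{(i)}$ to be the normalization of $A_iA_{i+1}\cdots A_k\,v^{(1)}\in\mathbb C^{n_i}$ (a.s.\ nonzero), so that $A_iv^{(i+1)}$ is a scalar multiple of $v^{(i)}$ for every $i$ (cyclically, with $v^{(k+1)}:=v^{(1)}$, since $A_1v^{(2)}$ is proportional to $A_1\cdots A_kv^{(1)}$); complete each $v^{(i)}$ to a unitary matrix with first column $v^{(i)}$, pass to the bottom-right $(n_i-1)\times(n_{i+1}-1)$ blocks, and iterate. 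Because $n_1=\min_i n_i$ the minimum stays attained at index $1$ at every stage, so the deflation terminates after $n_1$ steps and yields $T_i$ of the stated shape whose $\ell$-th diagonal entry is the scalar produced at the $\ell$-th step. A direct block computation (again using $n_1=\min_i n_i$) shows that $T_1T_2\cdots T_k$ is the $n_1\times n_1$ upper triangular matrix obtained by multiplying the top-left blocks of the $T_i$, so its $\ell$-th diagonal entry equals $\prod_{j=1}^k(T_j)_{\ell\ell}$; hence $A=U_1(T_1\cdots T_k)U_1^*$ has eigenvalues $z_\ell=\prod_{j=1}^k(T_j)_{\ell\ell}$, $\ell=1,\dots,n_1$. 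One checks that the decomposition is unique up to the diagonal-phase freedom allowing $U_i\mapsto U_iD_i$, $T_i\mapsto D_i^*T_iD_{i+1}$.

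Next I would push the joint law of $(A_1,\dots,A_k)$ through this change of coordinates. That law is proportional to $\exp\bigl(-\sum_{i=1}^k\tr(A_i^*A_i)\bigr)$ times the product of Lebesgue measures, hence to $\exp\bigl(-\sum_i\tr(T_i^*T_i)\bigr)$ since $\tr(A^*A)$ is unitarily invariant. The Jacobian is then computed exactly as in the square case \cite{akemann}, from $U_i^*\,dA_i\,U_{i+1}=dT_i+(dX_i)T_i-T_i(dX_{i+1})$ with $dX_i=U_i^*\,dU_i$, by splitting each side into diagonal, strictly upper and strictly lower parts and taking exterior products. It should come out as a product of: Haar measure on each $\mathcal U(n_i)$ modulo its diagonal torus; flat Lebesgue measure on the non-diagonal entries of each $T_i$; the Vandermonde factor $\prod_{\ell<m}|z_\ell-z_m|^2$ produced by the strictly lower part of the constraint exactly as for square matrices (at position $(\ell,m)$ the relevant $k\times k$ determinant is $\prod_j(T_j)_{mm}-\prod_j(T_j)_{\ell\ell}=z_m-z_\ell$); and the genuinely new ingredient, a factor $\prod_{j=1}^k\prod_{\ell=1}^{n_1}|(T_j)_{\ell\ell}|^{2(n_j-n_1)}$ arising from the rectangular shapes, in close analogy with the way the Jacobian of the QR decomposition of a tall Gaussian matrix carries powers of its diagonal entries. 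Here the exponent $2(n_j-n_1)$ is twice the excess of the row-dimension of $A_j$ over the common dimension $n_1$: at the $\ell$-th deflation step the $j$-th block has $n_j-\ell+1$ rows while the eigenvector lives in $\mathbb C^{n_1-\ell+1}$, so the polar form of the relevant Gaussian vector supplies $|(T_j)_{\ell\ell}|^{2(n_j-\ell+1)-1}$ and the eigenvector-deflation part of the Jacobian removes the $\ell$-dependent amount $2(n_1-\ell+1)-1$ common to all $j$, leaving the net $\ell$-independent exponent $2(n_j-n_1)$. (For square $A_i$ all these exponents are $0$ and one recovers the unsigned case of Theorem~\ref{thm:result1}.)

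It then remains to integrate out everything but $(z_1,\dots,z_{n_1})$. The unitary variables decouple and give a normalizing constant; the non-diagonal entries of the $T_i$ (all entries other than the $n_1$ diagonal ones) are Gaussian and integrate to a constant; and for each fixed $\ell$ one changes variables from $\bigl((T_1)_{\ell\ell},\dots,(T_k)_{\ell\ell}\bigr)$ to $z_\ell=\prod_j(T_j)_{\ell\ell}$ together with $k-1$ auxiliary coordinates and integrates the latter out. By the convention fixed immediately after Theorem~\ref{thm:result1}, that last integral is precisely the one defining $\omega$, so the joint density of $(z_1,\dots,z_{n_1})$ is proportional to $\prod_\ell\omega(z_\ell)\prod_{\ell<m}|z_\ell-z_m|^2$ with $\omega$ as claimed. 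Finally, since $n_j\ge n_1$ for all $j$ the measure $\omega(z)\,|dz|^2$ is finite with finite moments of all orders, so it admits orthonormal polynomials $\{p_m\}_{m\ge 0}$ on $\mathbb C$, and any density of this product-Vandermonde form is the correlation structure of the determinantal point process with kernel $K_{n_1}(z,w)=\sqrt{\omega(z)\,\omega(w)}\sum_{m=0}^{n_1-1}p_m(z)\overline{p_m(w)}$ (see \cite{manjusurvey,soshnikovsurvey}); this is the same final step as in Theorem~\ref{thm:result1}.

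I expect the main obstacle to be the bookkeeping in the Jacobian computation for rectangular blocks: one must set up the deflation carefully enough to confirm that the \emph{only} new factor relative to the square periodic-Schur computation is $\prod_j\prod_\ell|(T_j)_{\ell\ell}|^{2(n_j-n_1)}$ with exactly those exponents, that the Jacobian is independent of the non-diagonal entries of the $T_i$ (so those really do integrate out to constants), and that the cyclic constraint $n_{k+1}=n_1$ produces no residual coupling among the diagonal entries at different $\ell$, so that the $z$-marginal genuinely factorizes. Once that is pinned down, the remainder is a routine adaptation of the square-matrix argument.
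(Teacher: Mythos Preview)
Your overall architecture is the paper's: a periodic Schur-type decomposition of the tuple $(A_1,\dots,A_k)$, a wedge-product Jacobian producing the Vandermonde and the extra powers $|(T_j)_{\ell\ell}|^{2(n_j-n_1)}$, and then integration of the Gaussian off-diagonal pieces. The difference is in how the decomposition is set up and how the Jacobian is computed. You take full unitaries $U_i\in\mathcal U(n_i)$ and build them by eigenvector deflation, then propose to compute the Jacobian directly from the differential relation $U_i^*\,dA_i\,U_{i+1}=dT_i+(dX_i)T_i-T_i(dX_{i+1})$. The paper instead never uses full unitaries in $\mathcal U(n_i)$: it first applies the ordinary Schur decomposition to the $n_1\times n_1$ product $A_1\cdots A_k$, then peels off the factors one at a time by RQ-decompositions $U_i^*A_i=S_iU_{i+1}^*$ with $U_{i+1}$ of size $n_{i+1}\times n_1$ (thin Stiefel frames), writing $A_i=U_iS_iU_{i+1}^*+V_iB_i$ with $B_i=V_i^*A_i$. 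Each step then invokes a known Jacobian (Schur for the product, RQ for each factor), so the final formula~\eqref{eqn:jacobian} is assembled from established pieces rather than computed from scratch.

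One point in your write-up needs correction: the decomposition with full $U_i\in\mathcal U(n_i)$ is \emph{not} unique up to diagonal phases alone. The last $n_i-n_1$ columns of $U_i$ are constrained only to be orthonormal and orthogonal to the first $n_1$ columns, so there is an additional $\mathcal U(n_i-n_1)$ freedom for each $i\ge 2$ (this is exactly the paper's $V_i$). In your parametrization this freedom must be quotiented out and the Jacobian computation must track it, which is extra bookkeeping; the paper's RQ route avoids the issue by never introducing those columns as variables. Apart from this, your heuristic for the exponent $2(n_j-n_1)$ lands on the right answer, and once the Jacobian is pinned down the rest of the argument is identical to the paper's.
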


 Our last example deals with product of truncated unitary matrices. This is a generalisation of a result on truncated unitary matrix by  \.{Z}yczkowski and Sommers  \cite{sommer}. We show that the eigenvalues of product of truncated unitary matrices and inverses are also determinantal. The following theorem states it precisely.

 \begin{theorem}\label{thm:result3}
Let $U_1,U_2,\ldots,U_k$ be $k$ independent  Haar distributed unitary matrices of dimension $n_i\times n_i$ ($n_i>m$) for $i=1,2,\ldots,k$ respectively and $A_1, A_2,\ldots,A_k$ be $m\times m$ left uppermost blocks of $U_1, U_2,\ldots,U_k$ respectively.
 Then the eigenvalues $z_1, z_2,\ldots,z_m$ of $A=A_1^{\epsilon_1}A_2^{\epsilon_2}\cdots A_k^{\epsilon_k}$,
where each $\epsilon_i$ is $+1$ or $-1$ form a determinantal point process on the complex plane.
 Equivalently one can say that the vector of eigenvalues of $A=A_1^{\epsilon_1}A_2^{\epsilon_2}
 \cdots A_k^{\epsilon_k}$ has density
(with respect to Lebesgue measure on $\mathbb{C}^{m}$) proportional to
$$
\prod_{\ell=1}^{m}\omega(z_{\ell})\prod_{1\le i<j\le m}|z_i-z_j|^2
$$
with a weight function
$$
|dz|^2\omega(z)=\int_{x_1^{\epsilon_1}\cdots x_{k}^{\epsilon_k}
=z}\prod_{j=1}^{k}(1-|x_{j}|^2)^{n_j-m-1}|x_j|^{(m-1)(1-\epsilon_j)}\one_{\{|x_{j}|\le 1\}}(x_{j})|dx_{j}|^2.
$$
\end{theorem}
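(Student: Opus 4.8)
The plan is to mimic the strategy that proves Theorems 1 and 2, replacing the Gaussian input by the truncated-unitary input. The backbone is a generalized Schur (QZ-type) decomposition: first I would write $A = A_1^{\epsilon_1}A_2^{\epsilon_2}\cdots A_k^{\epsilon_k}$ and, using the RQ/QR decompositions appropriately according to the sign $\epsilon_i$, produce a simultaneous unitary triangularization of the tuple $(A_1,\dots,A_k)$. Concretely, there exist unitary matrices $V_0,V_1,\dots,V_k$ (with $V_k = V_0$) and upper-triangular matrices $T_1,\dots,T_k$ such that $A_i = V_{i-1} T_i V_i^{*}$ when $\epsilon_i=+1$ and $A_i = V_{i-1} T_i^{-1} V_i^{*}$ (equivalently $A_i^{-1}=V_{i-1}T_iV_i^*$) when $\epsilon_i=-1$, so that $A = V_0 (T_1 T_2 \cdots T_k) V_0^{*}$ and the eigenvalues of $A$ are the products of corresponding diagonal entries $z_\ell = \prod_{i} t^{(i)}_{\ell\ell}{}^{\epsilon_i}$. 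The off-diagonal entries of the $T_i$ and the flag variables parametrizing the $V_i$ play the role of nuisance coordinates that will be integrated out.

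Next I would push the known distribution of a truncated unitary block through this change of variables. By \.{Z}yczkowski--Sommers \cite{sommer}, the $m\times m$ truncation $A_i$ of an $n_i\times n_i$ Haar unitary has density proportional to $\det(I - A_i^{*}A_i)^{n_i - 2m}$ on the set $\{A_i : I - A_i^{*}A_i > 0\}$ (this is the matrix analogue of the scalar weight $(1-|x|^2)^{n_i-m-1}$ appearing in the statement). The key computation is the Jacobian of the decomposition $A_i \mapsto (V_{i-1},T_i,V_i)$ — or rather of the combined map $(A_1,\dots,A_k)\mapsto(V_0; T_1,\dots,T_k)$ — which factorizes into a product over $\ell$ of $|t^{(i)}_{\ell\ell}|$-powers times a Haar piece on the unitary groups times a "Vandermonde-type" factor $\prod_{i<j}|z_i - z_j|^2$ coming from the simultaneous-triangularization degeneracy, exactly as in the Ginibre case treated in Theorem 1. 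When $\epsilon_i = -1$ one gets the extra factors $|t^{(i)}_{\ell\ell}|^{-2(\text{something})}$ that, after also transforming $\det(I - A_i^*A_i)$ (note $I - A_i^{-*}A_i^{-1}$ has a clean form in terms of $A_i^*A_i$), collapse to the $|x_j|^{(m-1)(1-\epsilon_j)}$ weight and the indicator $\one_{\{|x_j|\le 1\}}$ in the statement. After integrating out all off-diagonal $T$-entries and the unitary/flag coordinates, the density of the eigenvalue vector becomes proportional to $\prod_\ell \tilde\omega(z_\ell)\prod_{i<j}|z_i-z_j|^2$ where $\tilde\omega(z)$ is the $(k-1)$-fold "multiplicative convolution" integral $\int_{x_1^{\epsilon_1}\cdots x_k^{\epsilon_k}=z}\prod_j (1-|x_j|^2)^{n_j-m-1}|x_j|^{(m-1)(1-\epsilon_j)}\one_{\{|x_j|\le1\}}\,\prod_j|dx_j|^2$; this is precisely the asserted weight, and the fact that the joint density has the form $\prod\omega(z_\ell)\prod|z_i-z_j|^2$ is exactly the statement that the eigenvalues form a determinantal point process (via the standard orthogonal-polynomials / biorthogonal-ensemble argument, identical to the one invoked for Theorems 1 and 2).

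The main obstacle I expect is bookkeeping the Jacobian of the simultaneous triangularization correctly in the presence of mixed signs $\epsilon_i$, i.e.\ making sure that each inverted factor contributes the right power of $|t_{\ell\ell}|$ and the right transformed weight, and that the "chain" of flags $V_0\to V_1\to\cdots\to V_k=V_0$ integrates cleanly without leftover cross-terms between the $T_i$'s. A secondary technical point is the domain constraint: after triangularizing, one must check that $I - A_i^*A_i > 0$ translates into the product of scalar constraints $|t^{(i)}_{\ell\ell}| \le 1$ up to off-diagonal integration (the off-diagonal $T$-entries will have $z$-dependent Gaussian-like or polynomial ranges, which must be shown to integrate to a $z$-independent constant). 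Both of these are handled in Theorems 1 and 2 for the Gaussian case, and the truncated-unitary case differs only in the specific weight $\det(I - A_i^*A_i)^{n_i-2m}$ replacing $e^{-\operatorname{tr} A_i^*A_i}$; I would therefore organize the proof so that it runs in parallel with the earlier arguments and isolate exactly the steps where the new weight enters, invoking the structural parts (change of variables, Vandermonde factor, reduction to a biorthogonal ensemble) by reference.
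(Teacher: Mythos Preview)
Your approach is genuinely different from the paper's, and it has a real gap.

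\textbf{The gap.} You take as your starting point the matrix density of the truncated block, $\det(I-A_i^*A_i)^{n_i-2m}$ on $\{I-A_i^*A_i>0\}$. That formula is only valid when $n_i\ge 2m$. For $m<n_i<2m$, write $U_i=\begin{pmatrix}A_i&B_i\\C_i&D_i\end{pmatrix}$; then $I-A_i^*A_i=C_i^*C_i$ has rank at most $n_i-m<m$, so $\det(I-A_i^*A_i)=0$ almost surely and the truncation has \emph{no} density with respect to Lebesgue measure on $m\times m$ matrices. Since the theorem is stated for all $n_i>m$, your argument simply does not start in the range $m<n_i<2m$. The paper avoids this entirely: it never writes down a density for $A_i$, but works directly with Haar measure on $\mathcal U(n_i)$, approximates it by Lebesgue measure on a tube $\{\|X^*X-I\|<\epsilon\}$ (Lemma~\ref{lem:haarmeasure}), applies generalized Schur to the $m\times m$ corners, de-approximates to land on the submanifold $\mathcal V_{m,n_i}=\mathcal N_{m,n_i}\cap\mathcal U(n_i)$ (Lemma~\ref{lem:manifoldmeasure}), and then peels off columns one at a time with the Co-area formula; each peel is an integral over a sphere of the right dimension and yields one factor $(1-|x|^2)^{n_i-m-1}$. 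This works uniformly for every $n_i>m$.

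\textbf{A secondary issue.} Even in the range $n_i\ge 2m$ where your plan makes sense, the step you flag as ``off-diagonal entries integrate to a $z$-independent constant'' is not what happens. After Schur decomposition the relevant factor is $\det(I-T_i^*T_i)^{n_i-2m}$, and integrating out the strictly upper-triangular entries of $T_i$ is exactly what \emph{produces} the weight $\prod_\ell(1-|x^{(i)}_\ell|^2)^{n_i-m-1}$ on the diagonal entries, not a constant. (In the $2\times 2$ case, $\det(I-T^*T)=(1-|x_1|^2)(1-|x_2|^2)-|t|^2$, and integrating $t$ over its disk raises the exponent by one.) This computation can be carried out recursively and does give the right answer, but it is the heart of the matter, not a bookkeeping afterthought; you should not expect it to parallel the Gaussian case, where $e^{-\tr(T^*T)}$ genuinely factorizes and the off-diagonal integral \emph{is} a constant.

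In short: for $n_i\ge 2m$ your route is viable but the off-diagonal integration is where the weight is born; for $m<n_i<2m$ your starting density does not exist, and something like the paper's manifold-and-Co-area argument is needed.
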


We organize this paper as follows. In Section \ref{product:ginibre}, we prove Theorem \ref{thm:result1} leaving some technical details for Appendix section. In Section \ref{product:rectangular}, we prove Theorem \ref{thm:rectanguular} and in Section \ref{product:truncatedunitary}, we prove Theorem \ref{thm:result3} for the case
where all  $n_i$ are equal, because the general case is just few notational changes away from there.  In Section \ref{kernel}, we calculate the kernels of the determinantal point processes which have emerged  in Theorem \ref{thm:result1}, Theorem \ref{thm:rectanguular}  and Theorem \ref{thm:result3}.

In Section \ref{limiting spectral distribution}, using these kernels, we identify the limit of expected  empirical distribution of these matrix ensembles. In particular, in Theorem \ref{lsd:ginibre}, we calculate the limiting expected empirical distribution of square radial part of eigenvalues of product of Ginibre and inverse Ginibre matrices. Since one point corelation function of the corresponding point process, which gives the expected empirical spectral distribution, dose not depend on the angular part of the eigenvalues, the limiting distribution of the radial part identify the limiting spectral distribution completely. For limit of expected  spectral distribution of product of independent matrices with independent entries, one can see \cite{sean_soshnikov}, \cite{gotze_tikhomirov}.

 In Theorem \ref{lsd:rectangular} of Section \ref{limiting spectral distribution}, we calculate the limit of expected empirical distribution of square radial part of eigenvalues of product of rectangular matrices with independent complex Gaussian entries. Limit of empirical spectral distribution  of product of independent rectangular matrices has been derived in \cite{burda_nowak}, but the limiting density is obtained in terms of M-transform. However, we have a simple explicit expression in terms of uniform distribution for the limit. Finally in Theorem \ref{lsd:unitary} we calculate the same for product of truncated unitary matrices.

In Section \ref{appendix}, we give details of some technicalities  of the proof of Theorem \ref{thm:result1} and Theorem \ref{thm:rectanguular}, and a little discussion on theory of manifolds.

 All our proofs in this article rely greatly on generalized Schur decomposition of product of  matrices. We describe this decomposition at the end of this section. We also use   RQ decomposition and QR decomposition in the proof of Theorem \ref{thm:rectanguular} and Theorem \ref{thm:result3} respectively.
 We discuss these decompositions briefly in Section \ref{appendix}. More details on these decompositions
 can be found in \cite{forrester}, \cite{muirhead} and \cite{srivastava}.\\

 \noindent{\bf  Generalized Schur-decomposition:} Any $n\times n$ square matrices $A_1,A_2, \ldots,A_k$ can be written as
 \begin{eqnarray*}
 A_1&=&U_1S_1U_2^*,\\
 A_2&=&U_2S_2U_3^*,\\
 &\vdots &\\
 A_{k-1}&=&U_{k-1}S_{k-1}U_k^*,\\
 A_{k}&=&U_{k}S_{k}U_1^*,
 \end{eqnarray*}
 where $U_i\in \mathcal{U}(n)/(\mathcal{U}(1))^n$ and $S_1,S_2,\ldots,S_k$ are upper triangular matrices. For details, see Appendix.

\section{Product of Ginibre matrices and inverse Ginibre matrices}\label{product:ginibre}

We begin this section with some remarks on Theorem \ref{thm:result1}.
\begin{remark}
(i)  If $k=2$, $\epsilon_1=-1$ and $\epsilon_2=1$, then  from \eqref{weight1} we get that
$$|dz|^2\omega(z)=\int_{\frac{x_{2}} {x_{1}}=z}e^{- (|x_{1}|^{2}+|x_{2}|^2)}
|x_{1 }|^{2(n-1)}|dx_{1}|^2|dx_{2}|^2=c \frac{|dz|^2}{(1+|z|^2)^{(n+1)}},
$$
with some constant $c$. Hence the density of the eigenvalues of $A_1^{-1}A_2$
is proportional to
$$
\prod_{i=1}^{n}\frac{1}{(1+|z_i|^2)^{n+1}}\prod_{i<j}|z_{i}-z_j|^2.
$$
From the above expression it is clear that the eigenvalues of $A_1^{-1}A_2$ form a determinantal point process in a complex plane. This result was  proved by Krishnapur in \cite{manjunath} using a different technique.\\

\noindent(ii)  If $\epsilon_i=1$ for $i=1,2,\ldots,k$, then by Theorem \ref{thm:result1} it follows that the eigenvalues of
$A_1A_2\ldots A_k$  form a determinantal point process. This result is due to Akemann and Burda \cite{akemann}.\\

\noindent(iii) If $\epsilon_i=-1$ for $1\le i\le p$ and  $\epsilon_i=1$
for $p+1\le i\le k$, then weight function is given by
$$
|dz|^2\omega(z)=\int_{\frac{x_{p+1}\cdots x_k}{x_1x_2\cdots x_p}
=z}e^{\l(-\sum_{j=1}^{k}|x_{j}|^2\r)}
\prod_{j=1}^{p}|x_{j}|^{2(n-1)}\prod_{j=1}^k|dx_{j}|^2.
$$
\end{remark}
Now we proceed to prove Theorem \ref{thm:result1}. We shall denote
\begin{equation}
(Dx)=(dx_1,dx_2,\ldots, dx_n) \mbox{ and } |Dx|=\bigwedge_{i=1}^n(dx_i \wedge \bar{dx_i})
\end{equation}
for a complex vector $x=(x_1,x_2,\ldots,x_n)$ and
\begin{equation}\label{def:DA}|DA|=\bigwedge_{i,j}(dA(i,j)\wedge d\bar{A(i,j)})\end{equation}
 for a complex matrix $A$. Here wedge product is taken only for the non-zero variables of matrix $A$.

\begin{proof}[Proof of Theorem \ref{thm:result1}]
The density of $(A_1,A_2,\ldots,A_k)$ is proportional to
$$
\prod_{\ell=1}^{k}e^{-\tr( A_{\ell}A_{\ell}^*)}\bigwedge_{\ell=1}^{k}\bigwedge_{i,j=1}^n|dA_{\ell}(i,j)|^2
$$
where $|dA_{\ell}(i,j)|^2=dA_{\ell}(i,j)\wedge d\bar{A}_{\ell}({i,j}).$ Actually, here the proportional constant is
$\frac{1}{\pi^{kn^2}}$, but to make life less painful for ourselves, we shall omit constants in every step to follow.
Since we are dealing with probability measures, the  constants
can be recovered at the end
by finding normalization constants.

Now by generalized Schur-decomposition (see \eqref{eqn:square}), we have
$$
A_i^{\epsilon_i}=U_iS_i^{\epsilon_i}U_{i+1}^*,\;\;i=1,2,\ldots,k,\; \mbox{and}\;\; k+1=1,
$$
where $S_1,S_2,\ldots,S_k$ are upper triangular matrices
and  $U_1,U_2,\ldots,U_k$ are unitary matrices with $U_{k+1}=U_1$. Let the diagonal entries of $S_i$ be $(x_{i1},x_{i2},\ldots,x_{in})$.
One can see that eigenvalues $z_1,z_2,\ldots,z_n$ of $A=A_1^{\epsilon_1}A_2^{\epsilon_2}\ldots A_k^{\epsilon_k}$ are given by
$$
z_j=\prod_{i=1}^{k}x_{ij}^{\epsilon_i},\;\;j=1,2,\ldots,n.
$$
Now, by using Jacobian determinant calculation for generalised Schur decomposition (see \eqref{eqn:square}),
we get
\begin{equation}\label{ginibre:jacobian:relation}
\prod_{i=1}^{k}|DA_i^{\epsilon_i}|=|\Delta({\underline{z}})|^2\prod_{i=1}^{k}|DS_i^{\epsilon_i}|\prod_{i=1}^{k}|dH(U_i)|,
\end{equation}
where $\Delta({\underline{z}})=\prod_{i<j}(z_i-z_j)$. From \eqref{eqn:wedge:relation}, it is easy to see that
\begin{equation}\label{DxA}
|(Dx)A|=|A(Dx)^t|=|\det(A)|^2|Dx|\end{equation}
for any complex matrix $A$ and vector $x$.
Since
$$  |DA^{-1}|=|A^{-1}(DA)A^{-1}|,$$
using \eqref{DxA}, we have
\begin{equation}\label{DA:inverse}
|DA_i^{\epsilon_i}|=|\det(A_i)|^{4n(\frac{\epsilon_i-1}{2})}|DA_i|.\end{equation}
By similar calculation for upper triangular matrices $S_i$, we get
\begin{equation}\label{DS:inverse}
|DS_i^{\epsilon_i}|=|\det(S_i)|^{2(n+1)(\frac{\epsilon_i-1}{2})}|DS_i|.
\end{equation}
Now using \eqref{ginibre:jacobian:relation}, \eqref{DA:inverse} and \eqref{DS:inverse}, and since $
|\det(S_i)|=|\det(A_i)|$,
we get
\begin{eqnarray}\label{eqn:van1}
\prod_{i=1}^{k}|DA_i|=|\Delta({\underline{z}})|^2\prod_{i=1}^k|\det(S_i)|^{(1-\epsilon_i)(n-1)}\prod_{i=1}^{k}|DS_i||dH(U_i)|.
\end{eqnarray}
The density  of $A_1,A_2,\ldots,A_k$ can be written
in new variables as
$$
|\Delta(\underline{z})|^2\prod_{i=1}^ke^{-tr(S_iS_i^*)}|\det(S_i)|^{(1-\epsilon_i)(n-1)}|DS_i||dH(U_i)|.
$$

By integrating out  the  non-diagonal entries of $S_1,S_2,\ldots,S_k$, we get the density of
diagonal entries of  $S_1,S_2,\ldots,S_k$ to be proportional to
$$
|\Delta(\underline{z})|^2\prod_{i=1}^k\prod_{j=1}^{n}e^{-|S_i(j,j)|^2}|S_i(j,j)|^{(1-\epsilon_i)(n-1)}|dS_i(j,j)|^2.
$$
Hence the  density of $z_1,z_2,\ldots,z_{n}$ is proportional to
$$
\prod_{\ell=1}^{n}\omega(z_{\ell})\prod_{i<j}^{n}|z_i-z_j|^2
$$
with a weight function
$$
|dz|^2\omega(z)=\int_{x_1^{\epsilon_1}\cdots x_{k}^{\epsilon_k}
=z}e^{\l(-\sum_{j=1}^{k}|x_{j}|^2\r)}
\prod_{j=1}^{k}|x_{j}|^{(1-\epsilon_j)(n-1)}\prod_{j=1}^k|dx_{j}|^2.
$$
This completes the proof of the theorem.
\end{proof}

\section{product of rectangular matrices}\label{product:rectangular}
In this section we will prove Theorem \ref{thm:rectanguular} borrowing some results from Section \ref{appendix}. Before that we make a remark on the assumption of Theorem \ref{thm:rectanguular}.
\begin{remark}
The condition $n_1=\min\{n_1,n_2,\ldots,n_k\}$ in Theorem \ref{thm:rectanguular} is taken for simplicity. Since we want to calculate density of
non-zero eigenvalues  of product of (compatible) rectangular matrices  $A_1A_2\ldots A_k$ and
the set of non-zero eigenvalues of
$A_1A_2\ldots A_k$ remains unaltered for any rotational combination of
$A_1,A_2,\ldots,A_k$. 
So the set of non-zero eigenvalues of $A_1A_2\ldots A_k$ is less or equal to $\min\{n_1,n_2,\ldots,n_k\}$.  Therefore, we can assume that $n_1$ is the minimum among $n_1,n_2,\ldots,n_k$.
\end{remark}

\begin{proof}[Proof of Theorem \ref{thm:rectanguular}]
Density of $A_1,A_2,\ldots,A_k$ is proportional to
$$
\prod_{i=1}^{k}e^{-tr(A_iA_i^*)}|DA_i|
$$
where $|DA_i| $ is as defined in \eqref{def:DA}.
Now using the transformations as discussed in Remark \ref{re:tran} and also using (\ref{eqn:jacobian}), the density of $A_1,A_2,\ldots,A_k$ can be written
in new variables as
$$
|\Delta(Z)|^2\prod_{i=1}^ke^{-tr(S_iS_i^*+B_iB_i^*)}|\det(S_i)|^{2(n_{i+1}-n_1)}|DS_i||dH(U_i)||DB_i|
$$
where $B_i, S_i$ and $dH(U_i)$ are as in \eqref{transformation_i+2} and \eqref{dsf:DS_i} respectively   and
$$
\Delta(Z)=\prod_{i<j}^{n_1}(z_i-z_j)\; \mbox{and}\;z_j=\prod_{i=1}^{k}S_i(j,j)\;\mbox{for}\;j=1,2,\ldots,n_1.
$$
We take $B_1=0$ and $|DB_1|=1$.
By integrating out the variables in $B_2,\ldots,B_k$, $U_1,U_2,\ldots,U_k$ and the  non-diagonal entries of $S_1,S_2,\ldots,S_k$, we get the density of
diagonal entries of  $S_1,S_2,\ldots,S_k$ to be proportional to
$$
|\Delta(Z)|^2\prod_{i=1}^k\prod_{j=1}^{n_1}e^{-|S_i(j,j)|^2}|S_i(j,j)|^{2(n_{i+1}-n_1)}|dS_i(j,j)|^2.
$$
Hence the  density of $z_1,z_2,\ldots,z_{n_1}$ is proportional to
$$
\prod_{\ell=1}^{n_1}\omega(z_{\ell})\prod_{i<j}^{n_1}|z_i-z_j|^2
$$
with a weight function
$$
|dz|^2\omega(z)=\int_{z_{1}\cdots z_{k}=z}e^{\l(-\sum_{j=1}^{k}|z_{j}|^2\r)}
\prod_{j=1}^{k}|z_{j}|^{2(n_j-n_1)}\prod_{j=1}^k|dz_{j}|^2.
$$
This completes the proof of the theorem.
\end{proof}

\section{Product of truncated unitary matrices}\label{product:truncatedunitary}
Before we prove Theorem \ref{thm:result3}, we take a look at a special case of it, which covers the result of \.{Z}yczkowski and   Sommers  \cite{sommer}
on a single truncated unitary matrix.

\begin{remark}\label{thm:truncatedunitary}
(i). If $k=1$ and $\epsilon_1=1$, then  Theorem \ref{thm:result3} says that
the eigenvalues $z_1,z_2,\ldots,z_m$ of $A_{m\times m}$ left-upper block of Haar distributed unitary matrix
$$
 U_{n\times n}=
            \left[ {\begin{array}{cc}
             A_{m\times m} & B_{m\times n-m} \\
             C_{n-m\times m} & D_{n-m\times n-m} \\
                \end{array} } \right]
 $$
 form a determinantal point process with density proportional to
 \begin{equation}\label{unitary:specialcase}
 \prod_{1\le j<k \le m}|z_j-z_k|^2\prod_{i=1}^{m}(1-|z_i|^2)^{n-m-1}\one_{\{|z_i|\le 1\}}(z_i).
 \end{equation}
 This special case was proved by \.{Z}yczkowski and  Sommers  in \cite{sommer}.
 Here we present another proof of this special case which is  slightly different from previous two. We also hope that the proof of this special case \eqref{unitary:specialcase} will help the reader to understand the proof of Theorem \ref{thm:result3} in a better way.
 \vspace{.5cm}\\
(ii). For simplicity we have taken $m\times m$  left-upper blocks of matrices. But we can take  any $m\times m$ blocks of matrices, because their probability distributions are same.

\end{remark}

In proving \eqref{unitary:specialcase} and Theorem \ref{thm:result3}, we need to introduce some basic notation and facts. Let $\mathcal{M}_n$ be the space of all
 $n\times n$ complex matrices equipped with Euclidean norm, $\|M\|=\sqrt{\tr(M^*M)}$.
Let  $\mathcal{U}(n)$ be the space of all $n\times n$ unitary matrices. It is a manifold of
 dimension $n^2$ in $\mathbb{R}^{2n^2}$.
 Haar measure on $\mathcal{U}(n)$ is normalized volume measure on manifold $\mathcal{U}(n)$
 which is denoted by $H_{\mathcal{U}(n)}$. For a detailed discussion on this, see \cite{andersonbook}.
 Define
\begin{equation}\label{n_m,n}
\mathcal{N}_{m,n}=\{Y\in \mathcal{M}_n:Y_{i,j}=0,1\le j<i\le m \},\end{equation}
\begin{equation}\label{v_m,n}
\mathcal{V}_{m,n}=\mathcal{N}_{m,n}\cap \mathcal{U}(n).
\end{equation}
Let
$H_{\mathcal{V}}$ be the normalized volume measure on manifold $\mathcal{V}_{n,m}$.
But we suppress subscripts $m,n$ when there is no confusion.\\

\noindent\textbf{Weyl chamber:} This is a subset of $\mathbb C^n$ and is defined as $$\mathcal{W}_n:=\{(z_1,z_2,\ldots, z_n):z_1\ge z_2\ge \cdots\ge z_n\}\subset \mathbb{C}^n$$
where $z\ge w$ if $\Re(z)> \Re (w)$ or $\Re (z)=\Re(w)$ and $\Im(z)\ge \Im(w)$. The metric on Weyl chamber is given by
 $$
 \|\underline{z}-\underline{w}\|_{\mathcal{W}}=\min_{\sigma}\sqrt{\sum_{i=1}^n|z_i-w_{\sigma{i}}|^2},
 $$
 minimum is taken over all permutations of $\{1,2,\ldots ,n\}$. Weyl chamber with this metric is a polish space. We take the space of eigenvalues of $n\times n$ matrices as Weyl chamber through the following map
$\Phi_n:(\mathcal{M}_n,\|\cdot\|) \to  (\mathcal{W}_n,\|\cdot\|_{\mathcal{W}})$ which is defined as
\begin{equation}\label{Phi_n}\Phi_n(M)=(z_1,z_2,\ldots,z_n)\end{equation}
where $z_1\ge z_2\ge \cdots\ge z_n$ are the eigenvalues of $M$. The map $\Phi_n$ is a continuous map.
This can be seen from the fact that roots of complex polynomial are continuous functions
of its coefficients and eigenvalues are roots of characteristic polynomial whose coefficients
 are continuous functions of matrix entries.

Note that the map
\begin{equation}\label{Psi_n}\Psi_{n,m}  :(\mathcal{M}_n,\|\cdot\|) \to  (\mathcal{M}_m,\|\cdot\|), \  (n\ge m),\end{equation}
  taking every matrix to its $m\times m$ left uppermost block is also continuous.\\

\noindent {\bf A brief outline  of the proof of \eqref{unitary:specialcase}:}
The vector of eigenvalues $\underline{Z}=(z_1,z_2,\ldots,z_m)$ of $m\times m$ left uppermost block of a Haar distributed $n\times n$
unitary matrix define a measure
$\mu$ on $\mathcal{W}_m$. In other words $\underline{Z}$ is $\mathcal{W}_m$-valued random variable distributed according to $\mu$.  We show that there exists a function $p(z_1,z_2\ldots,z_m)$ such that expectation
of any complex valued bounded continuous function $f$ on $(\mathcal{W}_m,\|\cdot\|_{\mathcal{W}})$ is given by
\begin{eqnarray*}
\E[f(\underline{Z})]=\int_{\mathcal{W}_m}f(\underline{z})p(\underline{z})
|d\underline{z}|^2=\int_{\mathbb{C}^m}\frac{1}{m!}f(\underline{z})p(\underline{z})|d\underline{z}|^2
\end{eqnarray*}
where $p$ and $f$ are extended to $\mathbb{C}^m$ by defining
$$f(z_1,z_2,\ldots,z_m)=f(z_{(1)},z_{(2)},\ldots,z_{(m)}), \
p(z_1,z_2,\ldots,z_m)=p(z_{(1)},z_{(2)},\ldots,z_{(m)}),$$
where $\{z_1,z_2,\ldots,z_m\}=\{z_{(1)},z_{(2)},\ldots,z_{(m)}\}$  and $z_{(1)}\ge z_{(2)}\ge \cdots \ge z_{(n)}$ and
 $|d\underline{z}|^2$
is Lebesgue measure on $\mathbb{C}^m$.

So $p(\underline{z})$ gives the joint probability density of eigenvalues
$\underline{Z}$.  Also note that the set of symmetric continuous functions on $\mathbb{C}^m$ are in natural bijection
with set of continuous functions on $(\mathcal{W}_m,\|\cdot\|_{\mathcal{W}})$.

To compute   the above expectation, we approximate Haar measure on  $\mathcal{U}(n)$ by normalised Lebesgue measure
on its open neighbourhood in $\mathcal{M}_n $ (see Lemma \ref{lem:haarmeasure}).
We apply Schur decomposition to $m\times m$ left uppermost block and
integrate out the unitary matrix variables that come from Schur decomposition. By de-approximating (shrinking the neighbourhood), we get back to
$\mathcal{V}_{m,n}$, a sub-manifold of $\mathcal{U}(n)$ (using Lemma \ref{lem:manifoldmeasure} ).
 Then we integrate out auxiliary variables using Co-area formula to
 arrive at joint probability density of $\underline{Z}$.

Now we state Lemma \ref{lem:haarmeasure} and Lemma \ref{lem:manifoldmeasure}. We prove them at the end of this section.
 \begin{lemma}\label{lem:haarmeasure}
 Let $f:\mathcal{M}_n\to \mathbb{C}$ be a continuous function. Then
 $$
 \int f(U) dH_{\mathcal{U}(n)}(U) = \lim_{\epsilon \to 0} \frac{\int_{\|X^*X-I\|<\epsilon}
 f(X)dX }{\int_{\|X^*X-I\|<\epsilon} dX },
 $$
 where $dX$ and $\|\cdot\|$ denote differential element of volume measure  and   Euclidean norm  on $\mathcal M_n$, manifold
 of $n\times n$ complex matrices respectively and $H_{\mathcal{U}(n)}$ is the normalized volume measure on manifold $\mathcal{U}(n)$.
 \end{lemma}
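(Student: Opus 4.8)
The plan is to derive the formula for the Haar measure on $\mathcal{U}(n)$ as a limit of normalized Lebesgue measures over thickened shells $\{\|X^*X - I\| < \epsilon\}$ by exploiting the fact that both sides are $\mathcal{U}(n)$-invariant probability measures after normalization, so it suffices to match them on a single "local" neighborhood and then push forward. Concretely, I would first observe that $\mathcal{U}(n)$ is a smooth compact submanifold of $\mathcal{M}_n \cong \mathbb{R}^{2n^2}$, being the level set $\Phi^{-1}(0)$ of the submersion $\Phi(X) = X^*X - I$ (valued in Hermitian matrices), whose differential has constant full rank $n^2$ on $\mathcal{U}(n)$. This gives a tubular neighborhood: there is $\epsilon_0 > 0$ such that the set $T_\epsilon = \{\|X^*X - I\| < \epsilon\}$ for $\epsilon < \epsilon_0$ is diffeomorphic to a normal disk bundle over $\mathcal{U}(n)$, and the nearest-point projection $\pi_\epsilon : T_\epsilon \to \mathcal{U}(n)$ is well-defined and smooth.

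Next I would apply the coarea formula (or a Fubini-type argument over the normal bundle) to write, for continuous $f$,
\[
\int_{T_\epsilon} f(X)\, dX = \int_{\mathcal{U}(n)} \left( \int_{N_\epsilon(U)} f(U + \nu)\, J_\epsilon(U,\nu)\, d\nu \right) dH_{\mathcal{U}(n)}(U),
\]
where $N_\epsilon(U)$ is the slice of the normal fiber over $U$ and $J_\epsilon$ is the Jacobian factor relating Lebesgue measure on $\mathcal{M}_n$ to the product of the surface measure on $\mathcal{U}(n)$ and Lebesgue measure on the normal fibers. As $\epsilon \to 0$, continuity of $f$ gives $f(U + \nu) \to f(U)$ uniformly, and the Jacobian factor $J_\epsilon(U, \nu) \to 1$ uniformly (since at $\nu = 0$ the fiber is orthogonal to the tangent space, so the volume distortion tends to $1$). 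Hence the inner integral is asymptotic to $f(U) \cdot \mathrm{vol}(N_\epsilon(U))$. The key point is that $\mathrm{vol}(N_\epsilon(U))$ is \emph{independent of $U$} by the $\mathcal{U}(n)$-invariance of the whole construction: left (or right) multiplication by a fixed unitary is an isometry of $\mathcal{M}_n$ preserving $\mathcal{U}(n)$ and the defining function $\|X^*X - I\|$, hence permutes the normal fibers isometrically. Therefore
\[
\frac{\int_{T_\epsilon} f(X)\, dX}{\int_{T_\epsilon} dX} = \frac{\int_{\mathcal{U}(n)} f(U)\, \mathrm{vol}(N_\epsilon(U))\, dH + o(1)\cdot\mathrm{vol}(T_\epsilon)}{\mathrm{vol}(T_\epsilon)} \longrightarrow \int_{\mathcal{U}(n)} f(U)\, dH_{\mathcal{U}(n)}(U),
\]
using that $H_{\mathcal{U}(n)}$ is normalized and that $\mathrm{vol}(T_\epsilon) = \mathrm{vol}(N_\epsilon(U)) \cdot 1$ after integrating the constant over the normalized Haar measure.

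The main obstacle I expect is making the tubular-neighborhood and Jacobian estimates fully rigorous with uniform control: one must verify that $\Phi$ really is a submersion near $\mathcal{U}(n)$ (a short linear-algebra computation: $d\Phi_U(H) = U^*H + H^*U$ surjects onto Hermitian matrices), that the tube $T_\epsilon$ coincides with a genuine normal-bundle neighborhood for small $\epsilon$ (so that the nearest-point projection exists and is smooth, via the inverse function theorem / implicit function theorem), and that the volume-distortion Jacobian converges uniformly to $1$. An alternative, perhaps cleaner, route is to avoid the normal bundle entirely: define $\mu_\epsilon(f) = \left(\int_{T_\epsilon} f\, dX\right) / \left(\int_{T_\epsilon} dX\right)$, note directly that each $\mu_\epsilon$ is a $\mathcal{U}(n)$-invariant probability measure on $\mathcal{M}_n$ (by invariance of Lebesgue measure and of $T_\epsilon$), show via compactness of the tubes that any weak-$*$ subsequential limit is supported on $\mathcal{U}(n)$, and invoke uniqueness of the Haar probability measure on the compact group $\mathcal{U}(n)$ to conclude $\mu_\epsilon \Rightarrow H_{\mathcal{U}(n)}$; then $\mu_\epsilon(f) \to \int f\, dH_{\mathcal{U}(n)}$ for every continuous $f$ on $\mathcal{M}_n$. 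This second approach sidesteps the Jacobian bookkeeping and is the one I would write up, keeping the normal-bundle picture as intuition.
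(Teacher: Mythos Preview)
Your proposal is correct, and your second route (weak-$*$ compactness plus uniqueness of Haar measure) is a genuinely different and arguably cleaner argument than the paper's. The paper does not use the tubular neighborhood or invariance abstractly; instead it writes down the QR decomposition $X=US$ with $U\in\mathcal{U}(n)$ and $S$ upper triangular with positive diagonal, notes that $X^*X=S^*S$ so the constraint $\|X^*X-I\|<\epsilon$ involves only $S$, and uses the explicit Jacobian $dX=J(S)\,dH_{\mathcal{U}(n)}(U)\,dS$ to factor the ratio as
\[
\frac{\int_{\|S^*S-I\|<\epsilon} f(US)J(S)\,dS\,dH(U)}{\int_{\|S^*S-I\|<\epsilon} J(S)\,dS\,dH(U)},
\]
after which uniform continuity of $f$ forces $f(US)\to f(U)$ and the $S$-integrals cancel. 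So the paper's proof is a concrete change of variables that makes the ``fiber volume is constant in $U$'' fact automatic (the $S$-integral simply does not depend on $U$), whereas you deduce that constancy from the group symmetry. Your weak-$*$ argument is more conceptual and immediately generalizes to any compact group isometrically embedded in Euclidean space; the paper's QR approach is more hands-on but has the advantage that the same decomposition and Jacobian are reused verbatim to prove the companion Lemma on $\mathcal{V}_{m,n}$, where no group structure is available and your Haar-uniqueness shortcut would not directly apply.
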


 \begin{lemma}\label{lem:manifoldmeasure}
 Let $f:\mathcal{M}_n\to \mathbb{C}$ be a continuous function. Then
 $$
 \int f(V) dH_{\mathcal{V}}(V) = \lim_{\epsilon \to 0} \frac{\int_{\|X^*X-I\|<\epsilon}
  f(X)dX }{\int_{\|X^*X-I\|<\epsilon} dX },
 $$
 where $dX$ and $\|\cdot\|$ denote differential element of volume measure  and   Euclidean norm  on $\mathcal{N}_{m,n}$
 respectively and $H_{\mathcal{V}}$ is the normalized volume measure on manifold $\mathcal{V}_{n,m}$.
 \end{lemma}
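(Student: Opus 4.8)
The plan is to evaluate the limit on the right-hand side directly, via the coarea formula applied to the constraint map $X\mapsto X^*X-I$, in exact parallel with Lemma~\ref{lem:haarmeasure} (which is the special case $m=0$, $\mathcal N_{0,n}=\mathcal M_n$, $\mathcal V_{0,n}=\mathcal U(n)$). Write $\mathcal N=\mathcal N_{m,n}$, $\mathcal V=\mathcal V_{m,n}$, let $\mathcal H_n$ denote the real vector space of $n\times n$ Hermitian matrices with the Frobenius inner product, and set
$$G:\mathcal N\longrightarrow\mathcal H_n,\qquad G(X)=X^*X-I,$$
so that $\mathcal V=G^{-1}(0)$ and $\{X\in\mathcal N:\|X^*X-I\|<\eps\}=G^{-1}(B_\eps)$, where $B_\eps$ is the $\eps$-ball about $0$ in $\mathcal H_n$. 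First I would check that $G$ is a submersion on a neighbourhood of $\mathcal V$: for $V\in\mathcal V$ and $H\in T_V\mathcal N=\mathcal N$ one has $dG_V(H)=V^*H+H^*V$, and since $V$ is unitary $K:=V^*H$ ranges over $V^*\mathcal N$ as $H$ ranges over $\mathcal N$, so it suffices that $\{K+K^*:K\in V^*\mathcal N\}=\mathcal H_n$, which is a direct dimension/zero-pattern check. This simultaneously shows $\mathcal V$ is a compact embedded submanifold of $\mathcal N$ (compact since it is closed in the compact group $\mathcal U(n)$) of the expected dimension, and that the Riemannian volume measure it inherits from $\mathcal N$, after normalization, is precisely $H_{\mathcal V}$.

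Next, for $\eps$ small enough that $G^{-1}(B_\eps)$ lies in the region where $G$ is submersive, I would apply the coarea formula to $G$ — once for $f$, once for $f\equiv 1$ — and divide:
$$\frac{\displaystyle\int_{G^{-1}(B_\eps)}f(X)\,dX}{\displaystyle\int_{G^{-1}(B_\eps)}dX}=\frac{\displaystyle\int_{B_\eps}\Phi_f(y)\,dy}{\displaystyle\int_{B_\eps}\Phi_1(y)\,dy},\qquad \Phi_g(y):=\int_{G^{-1}(y)}\frac{g(X)}{\mathcal J G(X)}\,d\sigma_y(X),$$
where $\mathcal J G(X)=\sqrt{\det\bigl(dG_X\,dG_X^{\sharp}\bigr)}$ is the coarea factor, $dG_X^{\sharp}$ the adjoint of $dG_X$, and $\sigma_y$ the Riemannian surface measure on the level set $G^{-1}(y)$. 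For $y$ near $0$ the level sets vary smoothly (implicit function theorem) and $f,\,\mathcal J G$ are continuous, so $\Phi_f$ and $\Phi_1$ are continuous at $0$; hence $|B_\eps|^{-1}\int_{B_\eps}\Phi_g\to\Phi_g(0)$ as $\eps\to0$. Therefore the limit in the statement exists and equals
$$\frac{\Phi_f(0)}{\Phi_1(0)}=\frac{\displaystyle\int_{\mathcal V}f(X)\,\mathcal J G(X)^{-1}\,d\sigma_0(X)}{\displaystyle\int_{\mathcal V}\mathcal J G(X)^{-1}\,d\sigma_0(X)}.$$

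What remains — and this is the hard part — is to show that $\mathcal J G$ is constant on $\mathcal V$; granting it, the weight $\mathcal J G^{-1}$ cancels and the last display collapses to $\int_{\mathcal V}f\,d\sigma_0\big/\int_{\mathcal V}d\sigma_0=\int f\,dH_{\mathcal V}$, finishing the proof. In the case $m=0$ this is immediate: left and right multiplication by unitaries are Frobenius isometries, $\mathcal U(n)\times\mathcal U(n)$ acts transitively on $\mathcal U(n)$, and $G(QXR)=R^*G(X)R$ with $G(QX)=G(X)$, so the Gram operator $dG_V\,dG_V^{\sharp}$ is conjugated by an isometry as $V$ moves and its determinant does not change. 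For $m>0$ the group of unitaries preserving the zero pattern of $\mathcal N$, acting by left and right multiplication, still respects $G$ in this way but need not be transitive on $\mathcal V$, so I would instead compute $dG_V\,dG_V^{\sharp}$ by hand: using $dG_V^{\sharp}(Y)=2P_{\mathcal N}(VY)$ together with $V^*V=I$ one finds $dG_V\,dG_V^{\sharp}=4\,\mathrm{Id}-2S_V$ on $\mathcal H_n$, where $S_V(Y)=W+W^*$ with $W=V^*P_{\mathcal N^{\perp}}(VY)$ and $P_{\mathcal N},P_{\mathcal N^{\perp}}$ the orthogonal projections of $\mathcal M_n$ onto $\mathcal N$ and its orthogonal complement; the task is then to show that the spectrum of the finite-rank operator $S_V$, hence $\mathcal J G(V)^2=\det(dG_V\,dG_V^{\sharp})$, is independent of $V\in\mathcal V$. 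This last verification is the genuine obstacle of the proof; the submersion property (and with it the dimension of $\mathcal V$) and the elementary limit in the middle paragraph are routine once it is in hand.
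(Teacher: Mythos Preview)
Your coarea-formula approach is \emph{not} the route the paper takes, and it is substantially harder. The paper's proof---like its proof of Lemma~\ref{lem:haarmeasure}, so your claim of ``exact parallel'' is mistaken---proceeds via a QR decomposition adapted to $\mathcal N_{m,n}$ (worked out in the Appendix, see \eqref{qr1}): every $X\in\mathcal N_{m,n}$ factors as $X=VS$ with $V\in\mathcal V_{m,n}$ and $S$ upper triangular with positive diagonal, and the Jacobian of this change of variables is $J_m(S)$, a function of $S$ alone. Since $X^*X=S^*S$, the tube $\{\|X^*X-I\|<\eps\}$ is exactly $\mathcal V\times\{\|S^*S-I\|<\eps\}$ in these coordinates, so
\[
\frac{\int_{\|X^*X-I\|<\eps}f(X)\,dX}{\int_{\|X^*X-I\|<\eps}dX}
=\frac{\int_{\|S^*S-I\|<\eps}\Bigl(\int_{\mathcal V}f(VS)\,dH_{\mathcal V}(V)\Bigr)J_m(S)\,dS}{\int_{\|S^*S-I\|<\eps}J_m(S)\,dS},
\]
and uniform continuity of $f$ forces $f(VS)\to f(V)$ as $S\to I$, giving the result in two lines. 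The ``hard part'' you flag---constancy of $\mathcal J G$ on $\mathcal V$---never arises, because the QR factorisation already separates the unitary and ``radial'' directions with a Jacobian independent of $V$.

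By contrast, your argument is left genuinely incomplete: you reduce everything to showing that the spectrum of $S_V$ is independent of $V\in\mathcal V$, but you do not carry out this verification. It may well be true (and indeed the paper's proof implies it a posteriori, since both routes compute the same limit), but establishing it directly requires a careful computation exploiting the zero pattern of $V$, and the transitivity argument you used for $m=0$ is unavailable. So as written your proposal has a gap at exactly the point you yourself identify as the obstacle, while the paper's QR approach sidesteps it entirely. If you want to salvage your method, the cleanest way to close the gap is probably to \emph{import} the QR decomposition: the factorisation $X=VS$ with Jacobian depending only on $S$ shows at once that the tubular measure disintegrates over $\mathcal V$ with fibre measure independent of $V$, which forces the coarea factor to be constant.
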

\begin{proof}[Proof of \eqref{unitary:specialcase}]
For the sake of simplicity we shall use the same symbol $f$ for all $f,\ f\circ \Phi_m,\ f\circ \Phi_m\circ\Psi_{n,m}$ (when no confusion can arise) where $\Phi_m, \Psi_{n,m}$ are as defined in \eqref{Phi_n} and \eqref{Psi_n}. Now
\begin{eqnarray*}
\E [f(\underline{Z})] &=&\E[f\circ \Phi_m(A)]\\
&=&\E[f\circ \Phi_m\circ \Psi_{n,m}(U)]\\
&=&\int f(U)dH_{\mathcal{U}(n)}(U)\\
&=&\lim_{\epsilon\to 0}\frac{\int_{\|X^*X-I\|<\epsilon}f(X)dX}{\int_{\|X^*X-I\|<\epsilon}dX}
\;\;\mbox{(by Lemma \ref{lem:haarmeasure}).}
\end{eqnarray*}
We apply Schur-decomposition to $m\times m$ left upper most block of $X$ to get
\begin{eqnarray*}
X=\l[\begin{array}{lr}Q&0\\0&I\end{array}\r]Y\l[\begin{array}{lr}Q^*&0\\0&I\end{array}\r]
\end{eqnarray*}
where $Q$ is $m\times m $ unitary matrix, $Y_{i,j}=0$ for $1\le j<i\le m$. Usually one has to  choose some ordering on first $m$ diagonal entries of $Y$.
But since we are dealing with  the ratio of two integrals involving the same decomposition of $X$, so any ordering
chosen on diagonal entries of $Y$ will give us  same final result. So there is no need of ordering the diagonal entries.

Variables in $Q$  can be integrated out in both numerator and denominator, and they will cancel  each other.
So integrating out $Q$ variables we get that
\begin{eqnarray*}
\E[f(\underline{Z})]&=&\lim_{\epsilon\to 0}\frac{\int_{\{\|Y^*Y-I\|<\epsilon\}}
\prod_{i<j\le m}{|Y_{i,i}-Y_{j,j}|^2}f(Y)dY}{\int_{\{\|Y^*Y-I\|<\epsilon\}}
\prod_{i<j\le m}{|Y_{i,i}-Y_{j,j}|^2}dY}
\\&&[ Y\in\mathcal{N}_{m,n},\;dY \mbox{denotes Lebesgue measure on $\mathcal{N}_{m,n}$ }]
\\
&=&C\int |\Delta(\underline{z})|^2f(V)dH_{\mathcal{V}}(V)\;\;\;\mbox{[by Lemma \ref{lem:manifoldmeasure}]}
\end{eqnarray*}
where $\mathcal{V}=\mathcal V_{m,n}=\mathcal{N}_{m,n}\cap \mathcal{U}(n)$,
$C^{-1}=\int |\Delta(\underline{z})|^2dH_{\mathcal{V}}(V)$,  $z_i=V_{i,i}$ for $ i=1,2,\ldots,m,$   $V\in \mathcal{V}$,
 $\Delta(\underline{z})= \prod_{1\le i<j \le m}(z_i-z_j)$ is vandermonde determinant term. We shall use symbol $C$ for all numerical constants. Let
 $$\mathcal{V}_0= \{V_{n\times m}:V^*V=I, V_{ij}=0 \; \forall\; 1\le j<i\le m\}$$
and  $g:\mathcal{V}\to \mathcal{V}_0$ be projection map such that $g(V)$ is a matrix of dimension $n\times m$  by removing last $n-m$ columns from $V$. Now by Co-area formula \eqref{eqn:coarea},
\begin{equation}\label{coarea:1st}
\int |\Delta(\underline{z})|^2f(\underline{z}) dH_{\mathcal{V}}(V)=\int \left( \int |\Delta(\underline{z})|^2f(\underline{z})
dH_{g^{-1}(V_0)}(V)\right)dH_{\mathcal{V}_0}(V_0).
\end{equation}
For a fixed $V_0\in \mathcal{V}_0$ (so $z_1,z_2,\ldots,z_{m}$ are also fixed), ${g^{-1}(V_0)}$
 is a sub-manifold of  $\mathcal{V}$. It is isometric to the set of unit vectors in $\mathbb{C}^n$ which are orthogonal to  $m$
  columns of $V_0$. So $g^{-1}(V_0)$ is isometric to the manifold $\mathcal{U}(n-m)$. Jacobian in the Co-area formula for projection maps is equal to one.
So from \eqref{coarea:1st}, we get
\begin{eqnarray*}
\E [f(\underline{Z})]=C\int |\Delta(\underline{z})|^2f(\underline{z}) dH_{\mathcal{V}_0}(V_0)
\end{eqnarray*}
where
$z_{i}=V_0(i,i)$.
 Note that  $\mathcal{V}_0$ is a manifold of dimension $2nm-2m^2+m$ in $\mathbb{R}^{2nm-m^2+m}$ and its normalized volume
 measure is denoted by $H_{\mathcal{V}_0}$. Similarly we define
 $$\mathcal{V}_i= \{V_{n\times m-i}: V^*V=I,
 V_{s,t}=0 \; \forall\; 1\le s<t\le m\}$$
  and denote its normalized volume measure  by $H_{\mathcal{V}_i}$.
 Here also we denote  $V_i(\ell,\ell)$ by  $z_{\ell}$, where $V_i \in \mathcal{V}_i$.

Let $g_0:\mathcal{V}_0\to \mathcal{V}_1$ be projection map such that $g_0(V_0)$ is a matrix of dimension $n\times (m-1)$  by removing last column from $V_0$. Again by Co-area formula
\begin{eqnarray*}
\int |\Delta(\underline{z})|^2f(\underline{z}) dH_{\mathcal{V}_0}(V_0)=\int \left( \int |\Delta(\underline{z})|^2f(\underline{z})
dH_{g_0^{-1}(V_1)}(V)\right)dH_{\mathcal{V}_1}(V_1).
\end{eqnarray*}
For a fixed $V_1\in \mathcal{V}_1$ (so $z_1,z_2,\ldots,z_{m-1}$ are also fixed), ${g_0^{-1}(V_1)}$
 is a sub-manifold of  $\mathcal{V}_0$. It is isometric to the set of unit vectors in $\mathbb{C}^n$ which are orthogonal to $m-1$
  columns of $V_1$ whose $m$-th coordinates are zero. So $g_0^{-1}(V_1)$ is isometric to the manifold
  $$\mathcal{T}_1=\{(z_m,a_1,a_2,\ldots,a_{n-m})\in\mathbb{C}^{n-m+1}: |z_m|^2+\sum_{i=1}^{n-m} |a_i|^2 =1\}.$$
 When integrating $|\Delta(\underline{z})|^2f(\underline{z})$
 with respect to $H_{g_0^{-1}(V_1)}$, because of $z_m$ being the only $\underline{Z}$ variable involved, we get
\begin{eqnarray*}
\int |\Delta(\underline{z})|^2f(\underline{z}) dH_{\mathcal{V}_0}(V_0)=\int |\Delta(\underline{z})|^2f(\underline{z})
 dH_{\mathcal{T}_1}dH_{\mathcal{V}_1}(V_1).
\end{eqnarray*}
Now, by integrating out $a_1,a_2,\ldots,a_{n-m}$, we get
\begin{equation}\label{eqn:coarea1}
\int |\Delta(\underline{z})|^2f(\underline{z}) dH_{\mathcal{V}_0}(V_0)=C \int |\Delta(\underline{z})|^2
f(\underline{z})(1-|z_m|^2)^{n-m-1}\one_{\{|z_m|\le 1\}}(z_m) dH_{\mathcal{V}_1}(V_1)|dz_m|^2.
\end{equation}
Again by applying Co-area formula on the right hand side of \eqref{eqn:coarea1} and using similar argument as above, we get that
$$\E [f(\underline{Z})]=C \int |\Delta(\underline{z})|^2f(\underline{z})\prod_{\ell=m-1}^{m}(1-|z_\ell|^2)^{n-m-1}
\one_{\{|z_\ell|\le 1\}}(z_\ell) dH_{\mathcal{V}_2}(V_2)\prod_{\ell=m-1}^{m}|dz_\ell|^2.
$$
Thus by consecutive application of Co-area formula  $i$ times, we get
\begin{eqnarray*}
\E [f(\underline{Z})]=C \int |\Delta(\underline{z})|^2f(\underline{z})\prod_{\ell=m-i+1}^{m}(1-|z_\ell|^2)^{n-m-1}
\one_{\{|z_\ell|\le 1\}}(z_\ell) dH_{\mathcal{V}_i}(V_i)\prod_{\ell=m-i+1}^{m}|dz_\ell|^2.
\end{eqnarray*}

Proceeding this way, finally we get

\begin{eqnarray*}
\E f(\underline{Z})=C \int |\Delta(\underline{z})|^2f(\underline{z})\prod_{\ell=1}^{m}(1-|z_\ell|^2)^{n-m-1}
\one_{\{|z_\ell|\le 1\}}(z_\ell) \prod_{\ell=1}^{m}|dz_\ell|^2
\end{eqnarray*}
and this completes the proof.\end{proof}
\begin{proof}[Proof of Theorem \ref{thm:result3}]
For the sake of simplicity, let $n_i=n$ for $i=1,2,\ldots, k$. Let $z_1\geq z_2\geq \cdots \geq z_m$ be the eigenvalues of $A=A_1^{\epsilon_1}A_2^{\epsilon_2}\cdots A_k^{\epsilon_k}$ where $A_i$ be the left uppermost $m\times m$ block of $U_i$ and $U_1, U_2,\ldots,U_k$ be  $n\times n$
 independent Haar distributed unitary matrices, $\epsilon_i=1$ or $-1$. We denote the vector of eigenvalues of $A$ by
 $$\underline Z=(z_1,z_2,\ldots,z_m).$$
Let $f$ be any bounded continuous  function of $\underline{Z}$. In computation of expectation of $f(\underline{Z})$, we approximate Haar measure on direct product of $k$ unitary groups by normalised Lebesgue measure on direct product of their open neighborhoods in $\mathcal{M}(n)$ (using Lemma \ref{lem:haarmeasure}). We apply generalised Schur decomposition to $m\times m$ left uppermost blocks of those $k$ matrices (with powers $\epsilon_i$) and integrate out the unitary matrix variables that come from this generalised Schur decomposition. Then by de-approximating, we get back to integration on direct product of $k$ copies of $\mathcal{V}_{m,n}$. We see that eigenvalues $\underline{Z}$, Jacobian determinant of this schur decomposition are products of diagonal entries of $m\times m$ left uppermost blocks of those $k$ matrices (with powers $\epsilon_i$). So, we would like to integrate out all the variables except the first $m$ diagonal entries of each ${V_i}$. The method of integrating out unwanted variables from each $V_i$ is exactly same as in the proof of \eqref{unitary:specialcase}. We end up with joint probability density of these diagonal variables $\underline{x}$, whose appropriate products are random variables $\underline{Z}$. From there, we get the joint probability density of eigenvalues of $A$.

Coming to the computation,
by Lemma \ref{lem:haarmeasure}, we have
\begin{eqnarray*}
\E[f(\underline{Z})]&=&\int f(\underline{z}) \prod_{i=1}^{k}dH_{\mathcal{U}_i(n)}(U_i)\\
&=&\lim_{\epsilon_i\to 0}\frac{ \int_{\bigcap_{i=1}^k\|X_i^*X_i-I\|<\epsilon_i}f({\underline{z}})
dX_1dX_2\ldots dX_k}{\int_{\bigcap_{i=1}^k \|X_i^*X_i-I\|<\epsilon_i}dX_1dX_2\ldots dX_k}.
\end{eqnarray*}
Limit is taken for all $\epsilon_i$ one by one. For $1\leq i\leq k$, let
$$
X_{i}=
            \left[ {\begin{array}{cc}
             A_{i} & B_{i} \\
             C_{i} & D_{i} \\
                \end{array} } \right].
$$
Now by generalized Schur-decomposition, we have
$$
A_i^{\epsilon_i}=S_iT_i^{\epsilon_i}S_{i+1}^*,\;\;i=1,2,\ldots,k,\; \mbox{and}\;\; k+1=1,
$$
where $T_1,T_2,\ldots,T_k$ are upper triangular matrices
and  $S_1,S_2,\ldots,S_k$ are unitary matrices with $S_{k+1}=S_1$. Let the diagonal entries of $T_i$ be $(x_{i1},x_{i2},\ldots,x_{im})$. We denote $\{x_{ij}:i=1,2,\ldots,k,\;j=1,2,\ldots,m\}$ by $\underline{x}$. Now
$$
 X_{i}=
            \left[ {\begin{array}{cc}
             A_{i} & B_{i} \\
             C_{i} & D_{i} \\
                \end{array} } \right]
               =
            \left[ {\begin{array}{cc}
             S_{i+\frac{1-{\epsilon_i}}{2}} & 0 \\
            0& I \\
                \end{array} } \right]{Y_i}\left[ {\begin{array}{cc}
             S_{i+\frac{1+{\epsilon_i}}{2}}^* & 0 \\
            0& I \\
                \end{array} } \right]
$$
and
$${Y_i}=\left[ {\begin{array}{cc}
            T_{i} & \tilde{B_{i}} \\
             \tilde{C_{i}} & \tilde{D_{i}} \\
                \end{array} } \right].$$
One can see that eigenvalues $z_1,z_2,\ldots,z_m$ of $A=A_1^{\epsilon_1}A_2^{\epsilon_2}\ldots A_k^{\epsilon_k}$ are given by
$$
z_j=\prod_{i=1}^{k}x_{ij}^{\epsilon_i},\;\;j=1,2,\ldots,m.
$$
Now, by using Jacobian determinant calculation for generalised Schur decomposition (see \eqref{eqn:square}),
we get
\begin{eqnarray*}
\prod_{i=1}^{k}|DA_i^{\epsilon_i}|=|\Delta({\underline{z}})|^2\prod_{i=1}^{k}|DT_i^{\epsilon_i}|\prod_{i=1}^{k}|dH(S_i)|,
\end{eqnarray*}
where $\Delta({\underline{z}})=\prod_{i<j}(z_i-z_j)$. Since
$$
|DA_i^{\epsilon_i}|=|\det(A_i)|^{4m(\frac{\epsilon_i-1}{2})}|DA_i|,\
|DT_i^{\epsilon_i}|=|\det(T_i)|^{2(m+1)(\frac{\epsilon_i-1}{2})}|DT_i|$$
and $
|\det(T_i)|=|\det(A_i)|$,
we get
\begin{eqnarray}\label{eqn:van1}
\prod_{i=1}^{k}|DA_i|=|\Delta({\underline{z}})|^2L(\underline{x})\prod_{i=1}^{k}|DT_i||dH(S_i)|,
\end{eqnarray}
where
$$
L(\underline{x})=\prod_{i=1}^k|\det(T_i)|^{(1-\epsilon_i)(m-1)}.
$$
Using (\ref{eqn:van1}) and
Lemma \ref{lem:manifoldmeasure}, we get
\begin{eqnarray}\label{eqn:koushik}
\E[f(\underline{Z})]&=&\lim_{\epsilon_i\to 0}
\frac{ \int_{\bigcap_{i=1}^k\|{Y_i}^*{Y_i}-I\|<\epsilon_i}|\Delta(\underline{z})|^2f({\underline{x}})
L({\underline{x}})
d{Y_1}d{Y_2}\ldots d{Y_k}}{\int_{\bigcap_{i=1}^k \|{Y_i}^*{Y_i}-I \|<\epsilon_i}
|\Delta(\underline{z})|^2L({\underline{x}})d{Y_1}d{Y_2}\ldots d{Y_k}} \nonumber \\
&=&C \int |\Delta(\underline{z})|^2f(\underline{x})L({\underline{x}})\prod_{i=1}^k dH_{\mathcal{V}_i}(V_i)
\end{eqnarray}
where $\mathcal V_i=\mathcal N_{m,n}\cap \mathcal U_i(n)$ and   $C^{-1}=\int |\Delta(\underline{z})|^2L({\underline{x}})\prod_{i=1}^k dH_{\mathcal{V}_i}(V_i)$.
Now for each $\mathcal V_i$, following the arguments given in the  proof of \eqref{unitary:specialcase} (equation \eqref{coarea:1st} onwards), we get that joint probability
 density of $\underline{x}$ is proportional to
$$
\prod_{\ell=1}^{m}\prod_{j=1}^{k}(1-|x_{j\ell}|^2)^{n-m-1}|x_{j\ell}|^{(m-1)(1-\epsilon_j)}\one_{\{|x_{j\ell}|\le 1\}}(x_{j\ell})|dx_{j\ell}|^2\prod_{1\le i<j\le m}|z_i-z_j|^2.
$$
From the above
we get that the joint probability
 density of $\underline{Z}$ is proportional to
$$
\prod_{\ell=1}^{m}\omega(z_{\ell})\prod_{1\le i<j\le m}|z_i-z_j|^2
$$
with a weight function
$$
|dz|^2\omega(z)=\int_{x_1^{\epsilon_1}\cdots x_{k}^{\epsilon_k}
=z}\prod_{j=1}^{k}(1-|x_{j}|^2)^{n-m-1}|x_j|^{(m-1)(1-\epsilon_j)}\one_{\{|x_{j}|\le 1\}}(x_{j})|dx_{j}|^2.
$$
This completes the proof of the theorem when all $n_i$ are equal. If $n_i$ are not all equal, we will have in \eqref{eqn:koushik},
$\mathcal{V}_{i}=\mathcal{N}_{m,n_i}\cap \mathcal{U}(n_i)$. After integrating out all unwanted variables, we will have joint probability
 density of $\underline{Z}$  proportional to
$$
\prod_{\ell=1}^{m}\omega(z_{\ell})\prod_{1\le i<j\le m}|z_i-z_j|^2
$$
with a weight function
$$
|dz|^2\omega(z)=\int_{x_1^{\epsilon_1}\cdots x_{k}^{\epsilon_k}
=z}\prod_{j=1}^{k}(1-|x_{j}|^2)^{n_j-m-1}|x_j|^{(m-1)(1-\epsilon_j)}\one_{\{|x_{j}|\le 1\}}(x_{j})|dx_{j}|^2.
$$
This completes the proof of the theorem.
\end{proof}
It remains to prove Lemma \ref{lem:haarmeasure} and Lemma \ref{lem:manifoldmeasure}.
\begin{proof}[Proof of Lemma \ref{lem:haarmeasure}]
Any $n\times n$ complex matrix $X$ admits QR-decomposition
$$
X=US
$$
where $U$ is unitary matrix, $S$ is  upper triangular matrix with positive real diagonal entries. Then by \eqref{eqn:qrdecomposition},
$$
dX=J(S)|dH_{\mathcal{U}(n)}(U)||DS|
$$
where $J(S)$ is the Jacobian determinant of transformation due to QR-decomposition and is  given by
$$
J(S)=\prod_{i=1}^{n}|S_{i,i}|^{2(n-i+1)-1}.
$$
So we get
$$
\frac{\int_{\|X^*X-I\|<\epsilon} f(X)dX }{\int_{\|X^*X-I\|<\epsilon} dX }=\frac{\int_{\|S^*S-I\|<\epsilon}
 f(US)J(S)dSdH_{\mathcal{U}(n)}(U)} {\int_{\|S^*S-I\|<\epsilon}J(S) dSdH_{\mathcal{U}(n)}(U) }
 $$
Since $f$ is uniformly continuous on the region $\{X:\|X^*X-I\|<\epsilon\}$, given any $r\in\mathbb{N}$,
there exists $\epsilon_r>0$ such that
$$
|f(US)-f(U)|< \frac{1}{2^r} \;\mbox {for all}\; \|S^*S-I\|<\epsilon_r.
$$
Therefore
$$
\Big |\frac{\int_{\|S^*S-I\|<\epsilon_r} f(US)J(S)dSdH_{\mathcal{U}(n)}(U)} {\int_{\|S^*S-I\|<\epsilon_r}J(S)
dSdH_{\mathcal{U}(n)}(U) }-\int f(U)dH_{\mathcal{U}(n)}(U)\Big|<\frac{1}{2^r}
$$
and hence
$$
  \lim_{\epsilon \to 0} \frac{\int_{\|X^*X-I\|<\epsilon}
 f(X)dX }{\int_{\|X^*X-I\|<\epsilon} dX } =\int f(U) dH_{\mathcal{U}(n)}(U).
 $$
\end{proof}
\begin{proof}[Proof of Lemma \ref{lem:manifoldmeasure}]
Let $X\in \mathcal{N}_{m,n}$, then by QR-decomposition, we have
$$
 X=VS,\;\;\;V\in \mathcal{V}:=\mathcal{N}_{m,n}\cap \mathcal U(n),
 $$
where $V$ is $n\times n$ unitary matrix whose $(i,j)$th entry is zero for $1\le j<i\le m$ and
 $S$  upper triangular matrix with positive real diagonal entries. Then by \eqref{qr1}
$$
dX=J_m(S)|dH_{\mathcal{V}}(V)||DS|,
$$
where
$$
J_m(S)=\prod_{i=1}^{m}|S_{i,i}|^{2(n-m)+1}\prod_{i=m+1}^n|S_{i,i}|^{2(n-i)+1}.
$$
Using this decomposition we get
$$
\frac{\int_{\|X^*X-I\|<\epsilon} f(X)dX }{\int_{\|X^*X-I\|<\epsilon} dX }=\frac{\int_{\|S^*S-I\|<\epsilon}
 f(VS)J(S)dSdH_{\mathcal{V}}(V)} {\int_{\|S^*S-I\|<\epsilon}J(S) dSdH_{\mathcal{V}}(V) }.
$$
Since $f$ is uniformly continuous on region $\{X:\|X^*X-I\|<\epsilon\}$, given any $r\in\mathbb{N}$,
there exists $\epsilon_r>0$ such that
$$
|f(VS)-f(V)|< \frac{1}{2^r} \;\mbox {for all}\; \|S^*S-I\|<\epsilon_r.
$$
Therefore
$$
\Big|\frac{\int_{\|S^*S-I\|<\epsilon_r} f(VS)J(S)dSdH_{\mathcal{V}}(V)} {\int_{\|S^*S-I\|<\epsilon_r}J(S)
dSdH_{\mathcal{V}}(V) }-\int f(V)dH_{\mathcal{V}}(V)\Big|<\frac{1}{2^r}
$$
and hence
$$
  \lim_{\epsilon \to 0} \frac{\int_{\|X^*X-I\|<\epsilon}
 f(X)dX }{\int_{\|X^*X-I\|<\epsilon} dX } =\int f(V) dH_{\mathcal{V}}(V).
 $$
\end{proof}

\section{Orthogonal polynomials and kernels}\label{kernel}
In this section we calculate the kernels of the determinantal point process which have appeared in three theorems. First observe that  all weight functions in  three theorems 

are angle independent, that is,  $\omega(z)$ is function
of $|z|$ only. It implies that monic polynomials $P_i(z)=z^i$ are orthogonal with respect to these weight functions.\\

\noindent{\bf Product of Ginibre matrices and inverse of Ginibre matrices:}
Let $p$ be the number of non-inverted Ginibre matrices in Theorem \ref{thm:result1}. Then
\begin{eqnarray*}
\int z^a(\bar{z}^b)\omega(z)|dz|^2
&=&\prod_{j=1}^{k}\int (x_j)^{\epsilon_j a}(\bar{x_j})^{\epsilon_j b}e^{-|x_j|^2}|x_j|^{(1-\epsilon_j)(n-1)}|dx_j|^2
\\&=&\delta_{ab}(2\pi)^k(a!)^p((n-a-1)!)^{k-p}.
\end{eqnarray*}
Corresponding kernel of orthogonal polynomials is given by
\begin{eqnarray*}
\mathbb{K}_n(x,y)=\sqrt{\omega(x)\omega(y)}\sum_{r=0}^{n-1}\frac{(x\bar{y})^r}{(2\pi)^k(r!)^p((n-r-1)!)^{k-p}}.
\end{eqnarray*}

Like in \cite{akemann}, by using Mellin transform, one can  see that weight function $\omega(z)$ can be written as
\begin{eqnarray*}
\omega(z)=(2\pi)^{k-1}G_{k-p,p}^{p,k-p}\l[\begin{array}{c|}(-n,-n,\ldots,-n)_{k-p}\\(0,0,\ldots,0)_p\end{array}\;|z|^2\r]
\end{eqnarray*}
where the symbol $G_{pq}^{nm}(\cdots|z)$ denotes Meijer's G-function. For a detailed discussion on Meijer's G-function, see  \cite{richard}, \cite{Gradshteyn}.\\

\noindent{\bf Product of rectangular matrices:} In Theorem \ref{thm:rectanguular}, for the case of product rectangular matrices
\begin{eqnarray*}
\int z^a(\bar{z}^b)\omega(z)|dz|^2
&=&\prod_{j=1}^{k}\int (x_j)^a(\bar{x_j})^be^{-|x_j|^{2}}|x_j|^{2(n_j-n_1)}|dx_j|^2
\\&=&\delta_{ab}(2\pi)^k\prod_{j=1}^{k}(n_j-n_1+a)!.
\end{eqnarray*}
 Hence, the corresponding kernel of orthogonal polynomials is given by
\begin{eqnarray*}
\mathbb{K}_n(x,y)=\sqrt{\omega(x)\omega(y)}\sum_{r=0}^{n-1}\frac{(x\bar{y})^r}{(2\pi)^k\prod_{j=1}^{k}(n_j-n_1+r)!}.
\end{eqnarray*}
Again using Mellin transform, the weight function $\omega(z)$ can be written as
\begin{eqnarray*}
\omega(z)=(2\pi)^{k-1}G_{0,k}^{k,0}\l[\begin{array}{c|}-\\(n_1-n_1,n_2-n_1,\ldots,n_k-n_1)\end{array}\;|z|^2\r].
\end{eqnarray*}

\noindent{\bf Kernel for product of truncated unitary matrices:} For product of truncated unitary matrices, we calculate the kernel for a particular values of $\epsilon_i$'s.
We assume $\epsilon_i=1$ for $1\le i\le p$ and
$\epsilon_i=-1$ for $p+1\le i\le k$, in Theorem \ref{thm:result3}. Then we have
\begin{eqnarray*}
&&\int z^a(\bar{z}^b)\omega(z)|dz|^2
\\&=&\prod_{j=1}^{k}\int (x_j)^{\epsilon_j a}(\bar{x_j})^{\epsilon_j b}(1-|x_j|^2)^{n_j-m-1}|x_j|^{(m-1)(1-\epsilon_j)}\one_{\{|x_j|\le 1\}}(x_j)|dx_j|^2
\\&=&\delta_{ab}(2\pi)^k\underbrace{\prod_{j=1}^{p}B(a+1, n_j-m)\prod_{j=p+1}^{k}B(m-a, n_j-m)}_{C_a}.
\end{eqnarray*}
 Corresponding kernel of orthogonal polynomials is given by
\begin{eqnarray*}
\mathbb{K}_n(x,y)=\sqrt{\omega(x)\omega(y)}\sum_{r=0}^{n-1}\frac{(x\bar{y})^r}{(2\pi)^k C_r}
\end{eqnarray*}
and the weight function $\omega(z)$ can be written as
$$
(2\pi)^{k-1}\prod_{j=1}^k\Gamma(n_j-m)G_{k,k}^{p,k-p}\l[\begin{array}{c|}(-m,\ldots,-m,n_1-m,\ldots,n_p-m)_{k}\\(0,\ldots,0,-n_{p+1},\ldots,n_k)_k\end{array}\;|z|^2\r].
$$

\section{Limiting spectral distributions}\label{limiting spectral distribution}
In this section we calculate the expected limiting spectral distribution of product of Ginibre and inverse of Ginibre matrices,  product of compatible rectangular matrices and product of truncated unitary matrices.

\begin{theorem}\label{lsd:ginibre}
Let $A_1,A_2,\ldots,A_k$ be independent $n\times n$ random matrices with i.i.d. standard complex Gaussian entries.
Then the limiting expected empirical distribution of square radial part of eigenvalues of
$$
\l(\frac{A_1}{\sqrt{n}}\r)^{\epsilon_1}\l(\frac{A_2}{\sqrt{n}}\r)^{\epsilon_2}\cdots \l(\frac{A_k}{\sqrt{n}}\r)^{\epsilon_k}
$$
where each $\epsilon_i$ is either $1$ or $-1$, is same as the distribution of
$$
U^p\l(\frac{1}{1-U}\r)^{k-p}
$$
where $U$ is a random variable distributed uniformly on $[0,1]$ and $p=\#\{\epsilon_i: \epsilon_i=1\}$. 
\end{theorem}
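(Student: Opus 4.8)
The plan is to start from the exact finite-$n$ description already available. By Theorem~\ref{thm:result1} the eigenvalues of $A=(A_1/\sqrt n)^{\epsilon_1}\cdots(A_k/\sqrt n)^{\epsilon_k}$ form a determinantal point process whose one-point correlation function (the expected empirical spectral measure, up to the factor $n$) is $\rho_n(z)=\tfrac1n\mathbb K_n(z,z)$, with $\mathbb K_n$ the kernel computed in Section~\ref{kernel}. Since every weight function there is radial, $\mathbb K_n(z,z)=\omega(|z|^2\text{-scaled})\sum_{r=0}^{n-1}\frac{|z|^{2r}}{(2\pi)^k(r!)^p((n-r-1)!)^{k-p}}$ after inserting the scaling $A_i\mapsto A_i/\sqrt n$, which amounts to replacing $x_j$ by $\sqrt n\,x_j$ in \eqref{weight1}. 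The expected empirical distribution of the \emph{square radial part} $|z|^2$ is therefore a one-dimensional object: pushing $\rho_n$ forward under $z\mapsto |z|^2=:t$ and doing the trivial angular integration, one gets an explicit density on $[0,\infty)$ that is a finite sum/integral of ratios of factorials (equivalently, a sum of incomplete-Gamma-type terms), exactly as in the Akemann--Burda analysis for $p=k$.

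The main step is then an asymptotic evaluation of this density (or, more cleanly, of its Stieltjes/Mellin transform or its moments) as $n\to\infty$. First I would record that the $r$-th term in the kernel sum, after the $1/\sqrt n$ scaling, concentrates (as a function of the position where the radial variable sits) around $|z|^2\approx$ a value governed by $r/n$; making the substitution $r=\lfloor sn\rfloor$, $s\in[0,1]$, and using Stirling's formula on $r!$ and $(n-r-1)!$ turns the sum into a Riemann sum. The upshot is that the limiting expected distribution of $t=|z|^2$ is the distribution of the random variable obtained by letting $s$ be uniform on $[0,1]$ and reading off the corresponding radial location; a short computation identifies that location as $s^{p}/(1-s)^{k-p}$ (the $p$ factors of $s$ from the non-inverted blocks contributing $s!\sim$ a factor behaving like $s$, and the $k-p$ inverted blocks contributing the $(n-r-1)!$ terms, which behave like $1/(1-s)$). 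Concretely: the limiting law of $|z|^2$ equals the law of $U^p(1/(1-U))^{k-p}$ with $U\sim\mathrm{Unif}[0,1]$, which is the claim.

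I would organize the argument through \textbf{moments}. Compute $\E\big[\frac1n\sum_i |z_i|^{2m}\big]=\int t^m \rho_n(t)\,dt$ exactly; by the orthogonality and the explicit inner products in Section~\ref{kernel} this equals $\frac1n\sum_{r=0}^{n-1}\frac{C^{(m)}_{r}}{C^{(0)}_r}$ where $C^{(m)}_r$ is the $(a,b)=(r+m,r+m)$-type normalizing constant; for the Ginibre/inverse-Ginibre weight this ratio is $\prod$ over the $p$ non-inverted indices of $\frac{(r+m)!}{r!}$ times $\prod$ over the $k-p$ inverted indices of $\frac{(n-r-m-1)!}{(n-r-1)!}$, i.e.\ $\frac{(r+m)!}{r!}\big/\big(\text{falling factorial of }(n-r-1)\text{ of length }m\big)$ raised to the appropriate powers. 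Setting $r=\lfloor sn\rfloor$ and letting $n\to\infty$, $\frac1{n^{mp}}\cdot(r+m)\cdots(r+1)\to s^{mp}$ and $\frac{n^{m(k-p)}}{(n-r-1)\cdots(n-r-m)}\to (1-s)^{-m(k-p)}$, so the $m$-th moment converges to $\int_0^1 s^{mp}(1-s)^{-m(k-p)}\,ds=\E[(U^p(1-U)^{-(k-p)})^m]$. Since these moments determine the law on $[0,\infty)$ (one checks Carleman's condition, or notes the limit law is compactly supported when $k=p$ and otherwise has controlled tails because $(1-U)^{-(k-p)}$ has all moments finite up to the obvious threshold — here one must be slightly careful and may instead argue via weak convergence of the densities directly), the theorem follows.

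The hard part will be the last sentence: for $p<k$ the candidate limit $U^p(1-U)^{-(k-p)}$ is heavy-tailed, so the method of moments may fail outright (moments of order $\ge 1/(k-p)$ diverge). The robust route, which I would actually carry out, is to prove convergence of the \emph{densities}: write the exact density of $t=|z|^2$ as $f_n(t)=\frac1n\sum_r \frac{t^{r}\,\omega_n(t)}{(2\pi)^{k}(r!)^p((n-r-1)!)^{k-p}}\cdot(\text{angular constant})$, recognize via the Mellin/Meijer-$G$ representation of $\omega_n$ in Section~\ref{kernel} that each summand is a sharply peaked bump, apply Laplace's method to locate its peak at $t\approx s^p(1-s)^{-(k-p)}$ with width $o(1)$ on the appropriate scale, and conclude that $f_n$ converges weakly to the pushforward of $\mathrm{Unif}[0,1]$ under $s\mapsto s^p(1-s)^{-(k-p)}$. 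Tightness at infinity (needed because the support is unbounded when $p<k$) is handled by the crude bound that the contribution of $r$ with $s=r/n$ near $1$ is $O(1/n)$ per term and there are $O(n)$ of them, but the mass they carry to $[M,\infty)$ vanishes as $M\to\infty$ uniformly in $n$ by a direct tail estimate on $(n-r-1)!$. Assembling these pieces gives the stated identification of the limiting expected empirical distribution.
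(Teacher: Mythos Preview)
Your starting point---the one-point function $\tfrac1n\mathbb K_n(z,z)$ from Section~\ref{kernel}---is the same as the paper's, but the two arguments diverge immediately after.

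Your primary route is moments, and you correctly flag that the \emph{limit} law $U^p(1-U)^{-(k-p)}$ has few finite moments when $p<k$. The situation is actually worse: already at finite $n$ the quantity $\int |z|^{2m}\mathbb K_n(z,z)\,|dz|^2$ diverges for every $m\ge 1$ as soon as one factor is inverted. Indeed each term requires $\int |z|^{2(r+m)}\omega(z)\,|dz|^2$, and for an inverted index $j$ the corresponding factor $\int |x|^{2(n-1-(r+m))}e^{-|x|^2}\,|dx|^2$ is infinite once $r+m\ge n$, which happens for $r=n-1$. So the moment method cannot even be set up, not merely fails to determine the limit. Your fallback---Laplace asymptotics on the Meijer-$G$ weight plus a direct weak-convergence/tightness argument---is a viable strategy in principle, but you have not carried out the Laplace step or the uniform tail bound, and both would be nontrivial.

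The paper sidesteps all of this with a probabilistic device. It observes that $\tfrac1n\mathbb K_n(z,z)$ is exactly the density of $X_{n,1}^{\epsilon_1}\cdots X_{n,k}^{\epsilon_k}$ for an explicit joint law on $(X_{n,1},\ldots,X_{n,k})$, and then works with the squared moduli $R_{n,j}=|X_{n,j}|^2$. Each marginal $R_{n,j}/n$ converges in law to $U\sim\mathrm{Unif}[0,1]$ via the one-line identity $e^{-nr}\sum_{a<n}(nr)^a/a!=\P[\mathrm{Pois}(nr)\le n-1]\to\mathbf 1_{\{r<1\}}$. Two short second-moment computations then show that $R_{n,j}/n-R_{n,\ell}/n\to 0$ in $L^2$ when $\epsilon_j=\epsilon_\ell$, while $R_{n,j}/n+R_{n,\ell}/n-1\to 0$ in $L^2$ when $\epsilon_j\ne\epsilon_\ell$. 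Thus all the $R_{n,j}/n$ with $\epsilon_j=+1$ collapse onto a single copy of $U$ and those with $\epsilon_j=-1$ onto $1-U$, and the product converges in distribution to $U^p(1-U)^{-(k-p)}$ by a Slutsky/continuous-mapping argument. No moments of the eigenvalue distribution, no Laplace analysis of Meijer-$G$, no separate tightness step. If you still want to finish your own route, the missing hard ingredient is the uniform Laplace analysis of $\omega_n$; the paper's alternative is worth knowing precisely because it avoids that analysis entirely.
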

\begin{proof}
Let $A=A_1^{\epsilon_1}A_2^{\epsilon_2}\cdots A_k^{\epsilon_k}$ and $p=\#\{\epsilon_i: \epsilon_i=1\}$.
We showed in Theorem \ref{thm:result1} that the eigenvalues of $A$ form a determinantal point process  with kernel
$$
\mathbb{K}_n(x,y)=\sqrt{\omega(x)\omega(y)}\sum_{a=0}^{n-1}\frac{(x\bar{y})^a}{(2\pi)^k(a!)^p((n-a-1)!)^{k-p}}
$$
where  $\omega(z)$ is given by
$$
|dz|^2\omega(z)=\int_{x_1^{\epsilon_1}\cdots x_{k}^{\epsilon_k}
=z}e^{\l(-\sum_{j=1}^{k}|x_{j}|^2\r)}
\prod_{j=1}^{k}|x_{j}|^{(1-\epsilon_i)(n-1)}\prod_{j=1}^k|dx_{j}|^2.
$$
Then the scaled one-point corelation function $\frac1n\mathbb{K}_n(z,z)$ gives the density of the expected empirical spectral distribution of $A$ where
$$
\mathbb{K}_n(z,z)=\omega(z)\sum_{a=0}^{n-1}\frac{|z|^{2a}}{(2\pi)^k(a!)^p((n-a-1)!)^{k-p}}.
$$
Let $(X_{n,1},X_{n,2},\ldots,X_{n,k})$ be random variables with joint probability density
$$
\frac{1}{n}e^{-\sum_{j=1}^k|x_j|^{2}}\prod_{j=1}^{k}|x_j|^{(1-\epsilon_j)(n-1)}
\sum_{a=0}^{n-1}\frac{|x_1^{\epsilon_1}x_2^{\epsilon_2}\cdots x_k^{\epsilon_k}|^{2a}}{(2\pi)^k(a!)^p((n-a-1)!)^{k-p}}.
$$
Then $\frac{1}{n}\mathbb{K}_n(z,z)$ is the density of the random variable $X_{n,1}^{\epsilon_1}X_{n,2}^{\epsilon_2}\cdots X_{n,k}^{\epsilon_k}$. Now the density of expected empirical spectral distribution of
$$
\l(\frac{A_1}{\sqrt{n}}\r)^{\epsilon_1}\l(\frac{A_2}{\sqrt{n}}\r)^{\epsilon_2}\cdots \l(\frac{A_k}{\sqrt{n}}\r)^{\epsilon_k}
$$
is the density of random variable
$$
\l(\frac{X_{n,1}}{\sqrt{n}}\r)^{\epsilon_1}\l(\frac{X_{n,2}}{\sqrt{n}}\r)^{\epsilon_2}\cdots \l(\frac{X_{n,k}}{\sqrt{n}}\r)^{\epsilon_k}.
$$
Since the joint probability density of $(X_{n,1},X_{n,2},\ldots,X_{n,k})$ is rotational invariant, so we calculate
the density only for radial part of
$$
\l(\frac{X_{n,1}}{\sqrt{n}}\r)^{\epsilon_1}\l(\frac{X_{n,2}}{\sqrt{n}}\r)^{\epsilon_2}\cdots \l(\frac{X_{n,k}}{\sqrt{n}}\r)^{\epsilon_k}=:Z_n.
$$
Now we have
\begin{eqnarray*}
|Z_n|^2&=&\l(\frac{|X_{n,1}|^2}{n}\r)^{\epsilon_1}\l(\frac{|X_{n,2}|^2}{n}\r)^{\epsilon_2}\cdots \l(\frac{|X_{n,k}|^2}{n}\r)^{\epsilon_k}
\\&=&\l(\frac{R_{n,1}}{n}\r)^{\epsilon_1}\l(\frac{R_{n,2}}{n}\r)^{\epsilon_2}\cdots \l(\frac{R_{n,k}}{n}\r)^{\epsilon_k},\ \mbox{say}.
\end{eqnarray*}
The joint probability density of $(R_{n,1},R_{n,2},\ldots,R_{n,k})$ is proportional to
$$
\frac{1}{n}e^{-\sum_{j=1}^kr_{n,j}}\prod_{j=1}^{k}|r_{n,j}|^{(1-\epsilon_j)(n-1)/2}
\sum_{a=0}^{n-1}\frac{(r_{n,1}^{\epsilon_1}r_{n,2}^{\epsilon_2}\ldots r_{n,k}^{\epsilon_k})^{a}}{(a!)^p((n-a-1)!)^{k-p}}
$$
and the density $f(r)$ of $R_{n,j}$ is given by
$$
f(r)=
\frac{1}{n}e^{-r}\sum_{a=0}^{n-1}\frac{r^a}{a!},\ \ 0< r<\infty.$$
So the density of $\frac{R_{n,j}}{n}$
$$
e^{-nr}\sum_{a=0}^{n-1}\frac{(nr)^{a}}{a!}=\P[\text{Pois}(nr)\le n-1]\to \l\{\begin{array}{lr} 1 & \mbox{if $0<r<1$}
\\0 & \mbox{otherwise}\end{array}\r.
$$
as $n\to \infty$. Hence we have
\begin{eqnarray}\label{eqn:uni1}
\frac{R_{n,j}}{n}\stackrel{\mathcal D}{\rightarrow}U \;\mbox{as}\;n\to \infty
\end{eqnarray}
where $U$ is a random variable distributed uniformly on $[0,1]$.
The joint density of $R_{n,j},R_{n,k}$ is
$$
\frac{1}{n}e^{-(x+y)}\sum_{a=0}^{n-1}\frac{(xy)^a}{a!a!}
$$
if both $\epsilon_j,\epsilon_k$ are either $+1$ or $-1$. Then
\begin{eqnarray*}
&&\E[|R_{n,j}-R_{n,k}|^2]
\\&=&\int_{0}^{\infty}\int_{0}^{\infty}\frac{1}{n}e^{-(x+y)}(x-y)^2\sum_{a=0}^{n-1}\frac{(xy)^a}{a!}dxdy\\
&=&\frac{2}{n}\int_{0}^{\infty}\int_{0}^{\infty}e^{-(x+y)}\sum_{a=0}^{n-1}\frac{x^{a+2}y^a}{a!a!}dxdy
-\frac{2}{n}\int_{0}^{\infty}\int_{0}^{\infty}e^{-(x+y)}\sum_{a=0}^{n-1}\frac{x^{a+1}y^{a+1}}{a!a!}dxdy
\\&=&\frac{2}{n}\l[\sum_{a=0}^{n-1}\{(a+2)(a+1)-(a+1)^2\}\r]
\\&=&n+1.
\end{eqnarray*}
Therefore
\begin{eqnarray}\label{eq:sub}
\l(\frac{R_{n,j}}{n}-\frac{R_{n,k}}{n}\r) \stackrel{L^2}{\to} 0\;\mbox{ as }n\to \infty.
\end{eqnarray}
If $\epsilon_j=1,\epsilon_k=-1$, then the joint density of $R_{n,j},R_{n,k}$ is
$$
\frac{1}{n}e^{-(x+y)}\sum_{a=0}^{n-1}\frac{x^ay^{n-1-a}}{a!(n-1-a)!}.
$$
Therefore we have
\begin{eqnarray*}
&&\E[|R_{n,j}+R_{n,k}-n|^2]\\&=&\frac{1}{n}\int_{0}^{\infty}\int_{0}^{\infty}e^{-(x+y)}(x+y-n)^2\sum_{a=0}^{n-1}
\frac{x^ay^{(n-a-1)}}{a!(n-a-1)!}dxdy
\\&=&\frac{2}{n}\int_{0}^{\infty}e^{-x}\sum_{a=0}^{n-1}\frac{x^{a+2}}{a!}dx
-4\int_{0}^{\infty}e^{-x}\sum_{a=0}^{n-1}\frac{x^{a+1}}{a!}dx
\\&&+\frac{2}{n}\int_{0}^{\infty}\int_{0}^{\infty}e^{-(x+y)}\sum_{a=0}^{n-1}\frac{x^{a+1}y^{n-a}}{a!(n-a-1)!}dxdy+n^2
\\&=&\frac{2}{n}\sum_{a=0}^{n-1}(a+2)(a+1)-4\sum_{a=0}^{n-1}(a+1)+\frac{2}{n}\sum_{a=0}^{n-1}(a+1)(n-a)+n^2
\\&=&\frac{2}{n}\sum_{a=0}^{n-1}(n+2)(a+1)-4\sum_{a=0}^{n-1}(a+1)+n^2
\\&=&(n+2)(n+1)-2n(n+1)+n^2=n+2
\end{eqnarray*}
and hence
\begin{eqnarray}\label{eq:inv}
\l(\frac{R_{n,j}}{n}+\frac{R_{n,k}}{n}-1\r) \stackrel{L^2}{\to} 0\;\mbox{ as }n\to \infty.
\end{eqnarray}
Now combining (\ref{eqn:uni1}), (\ref{eq:sub}) and (\ref{eq:inv}), we get
$$
\l(\frac{R_{n,1}}{n}\r)^{\epsilon_1}\l(\frac{R_{n,2}}{n}\r)^{\epsilon_2}
\cdots\l(\frac{R_{n,k}}{n}\r)^{\epsilon_k}\stackrel{\mathcal D}{\to}U^p\l[\frac{1}{1-U}\r]^{k-p}.
$$
where $p=\#\{\epsilon_i:\epsilon_i=1\}$.
\end{proof}
\begin{remark}
If $k=1$ and $\epsilon=1$, then it follows from Theorem \ref{lsd:ginibre} that the expected limiting spectral distribution of properly scaled Ginibre matrix is well known \textit{circular} law. If $k=2$ with $\epsilon_1=-1$, $\epsilon_2=1$, we get the expected limiting spectral distribution of  \textit{spherial ensemble}.
\end{remark}
In the following theorem we describe the limiting distribution of the radial part of the eigenvalues of product of rectangular matrices.
\begin{theorem}\label{lsd:rectangular}
Let $A_1,A_2,\ldots,A_k$ be independent rectangular matrices of dimension $n_i\times n_{i+1}$ for $i= 1,2,\ldots,k$,
with $n_{k+1}=n_1=\min\{n_1,n_2,\ldots,n_k\}$, and with i.i.d. standard complex Gaussian entries. If $\frac{n_j}{n_1}\to \alpha_j$ as $n_1\to \infty$ for $j=2,3,\ldots,k$, then the limiting expected empirical distribution of square radial part of eigenvalues of $\frac{A_1}{\sqrt{n}}\frac{A_2}{\sqrt{n}}\cdots \frac{A_k}{\sqrt{n}}$ is same  as the distribution  of following random variable
$$
U(U-1+\alpha_2)\cdots(U-1+\alpha_{k})
$$
where $U$ is a uniform random variable on $[0,1]$.
\end{theorem}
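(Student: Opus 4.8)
The plan is to follow the proof of Theorem~\ref{lsd:ginibre} essentially verbatim. By Theorem~\ref{thm:rectanguular} the eigenvalues of $A=A_1A_2\cdots A_k$ form a determinantal point process, and, using the kernel obtained in Section~\ref{kernel}, the density of the expected empirical spectral distribution of $A$ equals $\frac{1}{n_1}\mathbb{K}_n(z,z)$ with
$$
\mathbb{K}_n(z,z)=\omega(z)\sum_{r=0}^{n_1-1}\frac{|z|^{2r}}{(2\pi)^k\prod_{j=1}^{k}(n_j-n_1+r)!},
$$
where $\omega$ is the weight function of Theorem~\ref{thm:rectanguular}. As in the Ginibre case, I would introduce a random vector $(X_{n,1},\dots,X_{n,k})$ with joint density proportional to
$$
\frac{1}{n_1}\,e^{-\sum_{j=1}^{k}|x_j|^2}\prod_{j=1}^{k}|x_j|^{2(n_j-n_1)}\sum_{r=0}^{n_1-1}\frac{|x_1\cdots x_k|^{2r}}{(2\pi)^k\prod_{j=1}^{k}(n_j-n_1+r)!},
$$
so that $\frac{1}{n_1}\mathbb{K}_n(z,z)$ is exactly the density of $X_{n,1}\cdots X_{n,k}$; consequently the density of the expected spectral distribution of $\frac{A_1}{\sqrt n}\cdots\frac{A_k}{\sqrt n}$ (with $n=n_1$) is the density of $Z_n:=\frac{X_{n,1}}{\sqrt{n_1}}\cdots\frac{X_{n,k}}{\sqrt{n_1}}$. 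Since the joint law of the $X_{n,j}$ is invariant under rotating each coordinate, it suffices to identify the limiting law of $|Z_n|^2=\prod_{j=1}^{k}\frac{R_{n,j}}{n_1}$, where $R_{n,j}:=|X_{n,j}|^2$.

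The structural fact that makes everything work is this: after passing to moduli and integrating out the angular variables, the density of $(R_{n,1},\dots,R_{n,k})$ with respect to $dr_1\cdots dr_k$ is proportional to
$$
\frac{1}{n_1}\,e^{-\sum_{j=1}^{k}r_j}\sum_{r=0}^{n_1-1}\prod_{j=1}^{k}\frac{r_j^{n_j-n_1+r}}{(n_j-n_1+r)!},
$$
and the summand factors over $j$. Since $\int_0^\infty \frac{t^{n_j-n_1+r}}{(n_j-n_1+r)!}\,e^{-t}\,dt=1$ for every $j$ and $r$, the normalising constant is genuinely $n_1$, so $(R_{n,1},\dots,R_{n,k})$ is the \emph{uniform mixture} over $r\in\{0,1,\dots,n_1-1\}$ of the law under which $R_{n,1},\dots,R_{n,k}$ are \emph{independent} with $R_{n,j}\sim\mathrm{Gamma}(n_j-n_1+r+1,1)$. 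As in Theorem~\ref{lsd:ginibre}, this reduces the whole problem to sums of i.i.d.\ exponentials together with one Poisson-tail estimate.

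From this mixture picture I would extract three facts. (i) For $j=1$ the density of $R_{n,1}/n_1$ at $t$ is $e^{-n_1 t}\sum_{r=0}^{n_1-1}(n_1 t)^{r}/r!=\P[\text{Pois}(n_1 t)\le n_1-1]$, which tends to $\one_{(0,1)}(t)$ exactly as in Theorem~\ref{lsd:ginibre}, so $R_{n,1}/n_1\convd U$ with $U$ uniform on $[0,1]$. (ii) Conditionally on the mixing index $r$, the difference $R_{n,j}-R_{n,1}$ has mean $(n_j-n_1+r+1)-(r+1)=n_j-n_1$, independent of $r$, and variance $(n_j-n_1+r+1)+(r+1)\le n_j+n_1$; averaging over $r$ gives $\E[R_{n,j}-R_{n,1}]=n_j-n_1$ and $\Var(R_{n,j}-R_{n,1})=O(n_1)$, hence $\frac{R_{n,j}-R_{n,1}}{n_1}\to\alpha_j-1$ in $L^2$ (as $\frac{n_j}{n_1}-1\to\alpha_j-1$). (iii) Because the $\alpha_j-1$ are constants, (i) and (ii) combine via Slutsky's theorem to $(R_{n,1}/n_1,\dots,R_{n,k}/n_1)\convd(U,\,U+\alpha_2-1,\dots,U+\alpha_k-1)$, and the continuous mapping theorem applied to the product then yields $\prod_{j=1}^{k}\frac{R_{n,j}}{n_1}\convd U(U-1+\alpha_2)\cdots(U-1+\alpha_k)$ (with $\alpha_1=1$), which is the assertion since $|Z_n|^2=\prod_{j=1}^{k}\frac{R_{n,j}}{n_1}$.

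The step I expect to be the main obstacle is setting up the mixture/independent-Gamma representation cleanly — in particular carrying out the change of variables that eliminates the angular coordinates and verifying that the resulting normalising constant is exactly $n_1$, so that one really obtains a probability mixture rather than merely a positive combination of positive measures — together with the routine but slightly delicate upgrade from the marginal limit of $R_{n,1}/n_1$ and the $L^2$ convergence of $R_{n,j}-R_{n,1}$ to joint convergence of the full vector $(R_{n,1}/n_1,\dots,R_{n,k}/n_1)$. Once that structure is in place, the remaining estimates are precisely the Poisson-tail and Gamma-moment computations already performed in the proof of Theorem~\ref{lsd:ginibre}.
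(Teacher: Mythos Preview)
Your proposal is correct and follows essentially the same route as the paper: introduce the auxiliary vector $(X_{n,1},\dots,X_{n,k})$ whose product has density $\frac{1}{n_1}\mathbb{K}_n(z,z)$, pass to $R_{n,j}=|X_{n,j}|^2$, show $R_{n,1}/n_1\convd U$ via the Poisson-tail identity, and show $(R_{n,j}-R_{n,1})/n_1\to\alpha_j-1$ in $L^2$ by computing the first two moments. Your explicit reading of the joint law of $(R_{n,1},\dots,R_{n,k})$ as a uniform mixture over the index $r$ of independent $\mathrm{Gamma}(n_j-n_1+r+1,1)$ laws is a nice clarification the paper leaves implicit (it just writes down the density and calls the moment identities ``routine calculation''); once you note $\int_0^\infty e^{-t}t^m/m!\,dt=1$, the normalising constant is visibly $n_1$ and the ``obstacle'' you flagged disappears.
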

\begin{proof}
We have shown in Theorem \ref{thm:rectanguular} that the eigenvalues of $A_1A_2\ldots A_k$ form a determinantal point process with kernel
$$
\mathbb{K}_n(x,y)=\sqrt{\omega(x)\omega(y)}\sum_{a=0}^{n-1}\frac{(x\bar{y})^a}{(2\pi)^k\prod_{j=1}^{k}(n_j-n_1+a)!}
$$
where $\omega(z)$ is the weight function, given by
$$
|dz|^2\omega(z)=\int_{x_1x_2\ldots x_k=z}e^{-\sum_{j=1}^{k}|x_j|^2}\prod_{j=1}^{k}|x_j|^{2(n_j-n_1)}\prod_{j=1}^k|dx_j|^2.
$$
Then the scaled one-point corelation function $\frac1n\mathbb{K}_n(z,z)$ gives the density of the expected empirical spectral distribution of $A_1A_2\cdots A_k$. Let $n_1=n$ and
$(X_{n,1},X_{n,2},\ldots,X_{n,k})$ be random variables with joint probability density
$$
\frac{1}{n}e^{-\sum_{j=1}^k |x_j|^{2}}\prod_{j=1}^{k}|x_j|^{2(n_j-n)}
\sum_{a=0}^{n-1}\frac{|x_1x_2\ldots x_k|^{2a}}{(2\pi)^k\prod_{j=1}^{k}(n_j-n+a)!}.
$$
Then the density of $(X_{n,1}X_{n,2}\cdots X_{n,k})$ is given by $\frac{1}{n}\mathbb K_n(z,z)$. Now the density of expected empirical spectral distribution of
$\frac{A_1}{\sqrt{n}}\frac{A_2}{\sqrt{n}}\cdots \frac{A_k}{\sqrt{n}}$
is same as the density of the random variable
$Z_n=\frac{X_{n,1}}{\sqrt{n}}\frac{X_{n,2}}{\sqrt{n}}\cdots \frac{X_{n,k}}{\sqrt{n}}.$
Clearly, the joint probability density of $(X_{n,1},X_{n,2},\ldots,X_{n,k})$ is rotational invariant. So we calculate
the density of square of radial part of $Z_n$. We have
$$
|Z_n|^2=\frac{|X_{n,1}|^2}{n}\frac{|X_{n,2}|^2}{n}\cdots\frac{|X_{n,k}|^2}{n}
=\frac{R_{n,1}}{n}\frac{R_{n,2}}{n}\cdots\frac{R_{n,k}}{n},\;\;\mbox{say}.
$$
The joint probability density of $(R_{n,1},R_{n,2},\ldots,R_{n,k})$ is
$$
\frac{1}{n}e^{-\sum_{j=1}^{k}r_j}\prod_{j=1}^{k}r_j^{(n_j-n)}\sum_{a=0}^{n-1}\frac{(r_1r_2\ldots r_k)^a}{\prod_{j=1}^{k}(n_j-n+a)!}.
$$
Now by routine calculation it can be shown that
\begin{eqnarray}
&&\frac{R_{n,1}}{n}\stackrel{\mathcal D}{\to}U \;\;\mbox{as $n\to \infty$},\label{eqn:uni}
\\&&\E(R_{n,1}-R_{n,j})=(n-n_j)\;\;\mbox{for $j=2,3,\ldots,k$},\label{eqn:exp}
\\&&\E[(R_{n,1}-R_{n,j})^2]=(n-n_j)^2+n_j+1\;\mbox{for $j=2,3,\ldots,k$},\label{eqn:var}
\end{eqnarray}
where $U$ is a uniform random variable on $[0,1]$. By (\ref{eqn:exp}) and (\ref{eqn:var}), we have
\begin{eqnarray}\label{eqn:L2}
\frac{R_{n,1}}{n}-\frac{R_{n,j}}{n}-(1-\alpha_j)\stackrel{L^2}{\to}0\;\;\mbox{for $j=2,3,\ldots,k$}.
\end{eqnarray}
Therefore by (\ref{eqn:uni}) and (\ref{eqn:L2}), we conclude that the limiting distribution of $|Z_n|^2$ is same as the distribution of the following random variable
$$
U(U-1+\alpha_2)\cdots(U-1+\alpha_k).
$$
This completes proof of the theorem.
\end{proof}
In the following theorem we describe the limiting distribution of radial part of eigenvalues of product of truncated unitary matrices.
\begin{theorem}\label{lsd:unitary}
Let $U_1,U_2,\ldots,U_k$ be $k$ independent Haar distributed unitary matrices of dimension $n_i\times n_{i}$  for $i= 1,2,\ldots,k$ respectively and $A_1,A_2,\ldots,A_k$ be $m\times m$ left uppermost blocks of $U_1,U_2,\ldots,U_k$ respectively. If $\frac{n_i}{m}\to \alpha_i$ as $m\to \infty$ for $i=1,2,\ldots,k$, then the limiting expected empirical distribution of square radial part of eigenvalues of $A_1^{\epsilon_1}A_2^{\epsilon_2}\ldots A_k^{\epsilon_k}$ is same  as the distribution of following random variable
$$
\prod_{i=1}^{k}\left(\frac{\frac{1-\epsilon_i}{2}+\epsilon_iU}{\alpha_i-\frac{1+\epsilon_i}{2}+\epsilon_iU}\right)^{\epsilon_i}
$$
where $U$ is a random variable uniformly distributed on $[0,1]$ and each $\epsilon_i$ is $+1$ or $-1$.
\end{theorem}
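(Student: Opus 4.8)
The plan is to follow the template of the proofs of Theorems~\ref{lsd:ginibre} and~\ref{lsd:rectangular}. Assume first that all $n_i=n$. By Theorem~\ref{thm:result3} and the kernel computation of Section~\ref{kernel}, the eigenvalues of $A=A_1^{\epsilon_1}\cdots A_k^{\epsilon_k}$ form a determinantal point process with kernel $\mathbb{K}_m(x,y)=\sqrt{\omega(x)\omega(y)}\sum_{a=0}^{m-1}\frac{(x\bar y)^a}{(2\pi)^kC_a}$, where $C_a=\prod_{j:\epsilon_j=1}B(a+1,n_j-m)\prod_{j:\epsilon_j=-1}B(m-a,n_j-m)$, so that $\tfrac1m\mathbb{K}_m(z,z)$ is the density of the expected empirical spectral distribution of $A$. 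As in the earlier proofs I would introduce a rotationally invariant random vector $(X_{m,1},\dots,X_{m,k})$ with joint density proportional to
$$
\frac1m\prod_{j=1}^k(1-|x_j|^2)^{n_j-m-1}|x_j|^{(m-1)(1-\epsilon_j)}\one_{\{|x_j|\le1\}}(x_j)\sum_{a=0}^{m-1}\frac{|x_1^{\epsilon_1}\cdots x_k^{\epsilon_k}|^{2a}}{(2\pi)^kC_a},
$$
chosen so that $\tfrac1m\mathbb{K}_m(z,z)$ is exactly the density of $Z_m:=X_{m,1}^{\epsilon_1}\cdots X_{m,k}^{\epsilon_k}$; by rotational invariance it then suffices to identify the limiting law of $|Z_m|^2=\prod_{j=1}^kR_{m,j}^{\epsilon_j}$, where $R_{m,j}:=|X_{m,j}|^2$.

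The key algebraic observation is that, with $N_j:=n_j-m$ and the identity $1/B(a+1,N_j)=N_j\binom{N_j+a}{a}$ (and its analogue for $B(m-a,N_j)$), the $j$th factor of the joint density of $(R_{m,1},\dots,R_{m,k})$ equals $\tfrac{N_j}{(1-r_j)^2}\mathbb{P}[\mathrm{NB}_j=a]$ if $\epsilon_j=1$ and $\tfrac{N_j}{(1-r_j)^2}\mathbb{P}[\mathrm{NB}_j=m-1-a]$ if $\epsilon_j=-1$, where $\mathrm{NB}_j=\mathrm{NegBin}(N_j+1,1-r_j)$. Thus the joint density of $(R_{m,1},\dots,R_{m,k})$ is proportional to
$$
\frac1m\prod_{j=1}^k\frac{N_j}{(1-r_j)^2}\one_{\{0<r_j<1\}}\sum_{a=0}^{m-1}\prod_{j:\epsilon_j=1}\mathbb{P}[\mathrm{NB}_j=a]\prod_{j:\epsilon_j=-1}\mathbb{P}[\mathrm{NB}_j=m-1-a],
$$
with the $\mathrm{NB}_j$ independent given $(r_1,\dots,r_k)$ --- the analogue of the Poisson partial sum $\mathbb{P}[\mathrm{Pois}(nr)\le n-1]$ used in the Ginibre case. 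Integrating out all but one coordinate (each inner integral being exactly $1$) gives the marginal density of $R_{m,j}$ proportional to $\tfrac{N_j}{m(1-r)^2}\mathbb{P}[\mathrm{NegBin}(N_j+1,1-r)\le m-1]\one_{\{0<r<1\}}$.

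Since $n_i>m$ forces $\alpha_i\ge1$, and when $\alpha_j>1$ one has $N_j\to\infty$, the negative binomial $\mathrm{NegBin}(N_j+1,1-r)$ concentrates about its mean $(N_j+1)r/(1-r)$; hence $\mathbb{P}[\mathrm{NegBin}(N_j+1,1-r)\le m-1]\to\one_{\{r<1/\alpha_j\}}$, the marginal density of $R_{m,j}$ tends to $(\alpha_j-1)(1-r)^{-2}\one_{\{0<r<1/\alpha_j\}}$, and inverting its distribution function --- exactly as $R_{n,j}/n\convd U$ was obtained in~\eqref{eqn:uni} --- gives $R_{m,j}\convd U/(\alpha_j-1+U)$ for $U\sim\mathrm{Unif}[0,1]$. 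For the joint statement, the sum over $a\in\{0,\dots,m-1\}$ localises around a single index $a\approx um$: it forces $(N_j+1)R_{m,j}/(1-R_{m,j})$ for $\epsilon_j=1$ and $m-1-(N_j+1)R_{m,j}/(1-R_{m,j})$ for $\epsilon_j=-1$ to coincide up to $O(\sqrt m)$ fluctuations, and an $L^2$ estimate on the monotone statistics $(\alpha_j-1)R_{m,j}/(1-R_{m,j})$ (the analogue of~\eqref{eqn:L2}) upgrades this to joint convergence of $(\alpha_j-1)R_{m,j}/(1-R_{m,j})$ to $U$ when $\epsilon_j=1$ and to $1-U$ when $\epsilon_j=-1$, for one and the same $U$. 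Solving, $(R_{m,1},\dots,R_{m,k})\convd(\phi_1(U),\dots,\phi_k(U))$ with $\phi_j(u)=u/(\alpha_j-1+u)$ if $\epsilon_j=1$ and $\phi_j(u)=(1-u)/(\alpha_j-u)$ if $\epsilon_j=-1$ (each $\phi_j(U)\in(0,1)$ a.s., and the $\epsilon_j=-1$ parametrisation has the same marginal law by $U\leftrightarrow1-U$). By the continuous mapping theorem,
$$
|Z_m|^2=\prod_{j=1}^kR_{m,j}^{\epsilon_j}\convd\prod_{j=1}^k\phi_j(U)^{\epsilon_j}=\prod_{j:\epsilon_j=1}\frac{U}{\alpha_j-1+U}\prod_{j:\epsilon_j=-1}\frac{\alpha_j-U}{1-U}=\prod_{i=1}^k\Big(\frac{\tfrac{1-\epsilon_i}{2}+\epsilon_iU}{\alpha_i-\tfrac{1+\epsilon_i}{2}+\epsilon_iU}\Big)^{\epsilon_i},
$$
which is the assertion. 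The case of unequal $n_i$ is identical with $\mathcal{V}_i=\mathcal{N}_{m,n_i}\cap\mathcal{U}(n_i)$, exactly as in the proof of Theorem~\ref{thm:result3}.

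The hard part is the joint convergence: marginal convergence is routine once the negative-binomial rewriting is in place, but proving that the partial sum over $a$ genuinely localises and yields a \emph{single} uniform parameter governing all $k$ coordinates at once requires second-moment estimates for differences of the statistics $(\alpha_j-1)R_{m,j}/(1-R_{m,j})$, with bookkeeping complicated by the reflection $a\leftrightarrow m-1-a$ attached to every inverted factor and by the coordinate-dependent limits $\alpha_j$. One also has to treat the boundary case $\alpha_j=1$ separately --- there $N_j$ need not tend to infinity, so instead of concentration one argues directly that the $(1-r)^{-2}$ weight pushes $R_{m,j}\convp1$, consistent with $\phi_j\equiv1$ --- and confirm that the limit is a proper probability law, which follows from the joint convergence and the continuous mapping theorem since $(\phi_1(U),\dots,\phi_k(U))$ a.s. avoids the locus where $\prod_jr_j^{\epsilon_j}$ is singular.
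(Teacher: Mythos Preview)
Your proposal is correct and follows essentially the same architecture as the paper's proof: introduce the auxiliary vector $(X_{m,1},\dots,X_{m,k})$ whose product has density $\tfrac1m\mathbb K_m(z,z)$, pass to $R_{m,j}=|X_{m,j}|^2$, establish marginal convergence, and then upgrade to joint convergence via $L^2$ estimates on the statistics $(\alpha_j-1)R_{m,j}/(1-R_{m,j})$.

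The one genuine difference is how you handle the marginals. The paper computes moments directly from the Beta density,
\[
\E[R_{m,i}^\ell]=\frac1m\sum_{a=0}^{m-1}\frac{(a+\ell)\cdots(a+1)}{(n_i-m+a+\ell)\cdots(n_i-m+a+1)}\longrightarrow\int_0^1\frac{x^\ell}{(\alpha_i-1+x)^\ell}\,dx,
\]
recognising the right-hand side as $\E[(U/(\alpha_i-1+U))^\ell]$. You instead rewrite each factor via $1/B(a+1,N_j)=N_j\binom{N_j+a}{a}$ as a negative-binomial mass function, so that the marginal density becomes $\tfrac{N_j}{m(1-r)^2}\,\P[\mathrm{NegBin}(N_j+1,1-r)\le m-1]$, and then read off the limit by concentration of the negative binomial about its mean. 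This is the exact analogue of the Poisson trick $\P[\mathrm{Pois}(nr)\le n-1]\to\one_{\{r<1\}}$ in the Ginibre case, and it is a clean alternative to the moment computation; it also makes the localisation of the sum over $a$ around $a\approx um$ more transparent. Either route feeds into the same $L^2$ estimates \eqref{eqn:+1+1}--\eqref{eqn:+1-1} for the joint step, which the paper states but does not write out in detail, and which you correctly flag as the part requiring real work. Your separate treatment of the boundary case $\alpha_j=1$ is a point the paper leaves implicit.
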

\begin{proof}
We have shown that the eigenvalues of $A_1^{\epsilon_1}A_2^{\epsilon_2}\ldots A_k^{\epsilon_k}$ form a determinantal point process with kernel
$$
\mathbb{K}_m(x,y)=\sqrt{\omega(x)\omega(y)}\sum_{a=0}^{m-1}\frac{(x\bar{y})^a}{(2\pi)^kC_a}
$$
where $C_a=\prod_{\{j:\epsilon_j=1\}}B(a+1,n_j-m)\prod_{\{j:\epsilon_j=-1\}}B(m-a,n_j-m)$ and $\omega(z)$ is the weight function, given by
$$
|dz|^2\omega(z)=\int_{x_1^{\epsilon_1}x_2^{\epsilon_2}\ldots x_k^{\epsilon_k}=z}\prod_{j=1}^{k}(1-|x_j|^2)^{(n_j-m-1)}|x_j|^{(1-\epsilon_j)(m-1)}\one_{|x_j|\le 1}\prod_{j=1}^k|dx_j|^2.
$$
Then the density of expected empirical spectral distribution of $A_1^{\epsilon_1}A_2^{\epsilon_2}\ldots A_k^{\epsilon_k}$ is given by $\frac1m \mathbb K_m(z,z)$.
Let $(X_{m,1},X_{m,2},\ldots,X_{m,k})$ be random variables with joint probability density
$$
\frac{1}{m}\prod_{j=1}^{k}(1-|x_j|^2)^{(n_j-m-1)}|x_j|^{(1-\epsilon_j)(m-1)}\one_{|x_j|\le 1}\sum_{a=0}^{m-1}\frac{|x_1^{\epsilon_1}x_2^{\epsilon_2}\ldots x_k^{\epsilon_k}|^{2a}}{(2\pi)^kC_a}.
$$
Then it is easy to see that the density of $Z_m=X_{m,1}^{\epsilon_1}X_{m,2}^{\epsilon_2}\cdots X_{m,k}^{\epsilon_k}$ is also $\frac{1}{m}\mathbb K_m(z,z)$. Clearly, the joint probability density of $(X_{m,1},X_{m,2},\ldots,X_{m,k})$ is rotational invariant. So we calculate
the density for square radial part of $Z_m$. We have
$$
|Z_m|^2=|X_{m,1}|^{2\epsilon_1}|X_{m,2}|^{2\epsilon_2}\cdots|X_{m,k}|^{2\epsilon_k}
=R_{m,1}^{\epsilon_1}R_{m,2}^{\epsilon_2}\cdots R_{m,k}^{\epsilon_k},\;\mbox{say}.
$$
Now the joint probability density of $(R_{m,1},R_{m,2},\ldots,R_{m,k})$ is
$$
\frac{1}{m}\prod_{j=1}^{k}(1-r_j)^{(n_j-m-1)}r_j^{\frac{(1-\epsilon_j)}{2}(m-1)}\one_{0<r_j\le 1}\sum_{a=0}^{m-1}\frac{|r_1^{\epsilon_1}r_2^{\epsilon_2}\ldots r_k^{\epsilon_k}|^{a}}{C_a}.
$$
For $\epsilon_i=1$, density of $R_{m,i}$ for $i=1,2,\ldots,m$ is given by
$$
\frac{1}{m}\sum_{a=0}^{m-1}\frac{(n_i-m+a)!}{a!(n_i-m-1)!}(1-r)^{(n_i-m-1)}r^a.
$$
Therefore for any $\ell\in \mathbb{N}$, we have
\begin{eqnarray}\label{eqn:+1}
\E[R_{m,i}^{\ell}]&=&\int_{0}^{\infty}\frac{1}{m}\sum_{a=0}^{m-1}\frac{(n_i-m+a)!}{a!(n_i-m-1)!}
(1-r)^{(n_i-m-1)}r^{\ell+a}\nonumber
\\&=&\frac{1}{m}\sum_{a=0}^{m-1}\frac{(n_i-m+a)!}{a!(n_i-m-1)!}\frac{(\ell+a)!(n-m-1)!}{(n-m+\ell+r)!}\nonumber
\\&=&\frac{1}{m}\sum_{a=0}^{m-1}
\frac{(a+\ell)(a+\ell-1)\cdots(r+1)}{(n_i-m+a+\ell)(n_i-m+a+\ell-1)\cdots(n_i-m+a+1)}\nonumber
\\&{\longrightarrow}&\int_{0}^{1}\frac{x^{\ell}}{(\alpha_i-1+x)^{\ell}}dx\;\;\;\mbox{as $m\to \infty$}\nonumber
\\&=&\E\l[\l(\frac{U}{\alpha_i-1+U}\r)^{\ell}\r],
\end{eqnarray}
where $U$ is uniform random variable on $[0,1]$. By similar way it can be shown that  if $\epsilon_i=-1$, then
for any $\ell\in\mathbb{N}$,
\begin{eqnarray}\label{eqn:-1}
\E[R_{m,i}^{\ell}]\to \E\l[\l(\frac{1-U}{\alpha_i-U}\r)^{\ell}\r] \;\;\mbox{as $m\to \infty$}.
\end{eqnarray}

If $\epsilon_i=1$ and $\epsilon_j=1$, then it is not hard to see that
\begin{eqnarray}\label{eqn:+1+1}
\E\l[\frac{(\alpha_i-1)R_{m,i}}{1-R_{m,i}}-\frac{(\alpha_j-1)R_{m,j}}{1-R_{m,j}}\r]^2\to 0\;\;\mbox{as $m\to \infty$},
\end{eqnarray}
and if $\epsilon_i=+1$ and $\epsilon_j=-1$, then
\begin{eqnarray}\label{eqn:+1-1}
\E\l[\frac{(\alpha_i-1)R_{m,i}}{1-R_{m,i}}+\frac{(\alpha_j-1)R_{m,j}}{1-R_{m,j}}-1\r]^2\to 0\;\;\mbox{as $m\to \infty$}.
\end{eqnarray}
Now by (\ref{eqn:+1}), (\ref{eqn:-1}), (\ref{eqn:+1+1}) and (\ref{eqn:+1-1}), it follows that  the limiting  distribution of $|Z_m|^2$ is same as the distribution of the random variable
$$
\prod_{i=1}^{k}\left(\frac{\frac{1-\epsilon_i}{2}+\epsilon_iU}{\alpha_i-\frac{1+\epsilon_i}{2}+\epsilon_iU}\right)^{\epsilon_i}
$$

and this completes the proof.
\end{proof}

\section{Appendix}\label{appendix}
In this section we provide details of Jacobian determinant calculations of the proof of Theorem \ref{thm:result1} and Theorem \ref{thm:rectanguular}, and some discussion on techniques used in Theorem \ref{thm:result3} using manifolds.
Before doing the Jacobian determinant calculation, we state a few basic facts about wedge product.

If $dy_j=\sum_{k=1}^na_{j,k}dx_k$, for $1\le j\le n$, then using the alternating property $dx\wedge dy=-dy\wedge dx$ it is easy to see that
\begin{equation}\label{eqn:wedge:relation}
dy_1\wedge dy_2\wedge \ldots\wedge dy_n=\\det(a_{j,k})_{j,k\le n}dx_1\wedge x_2\wedge\ldots\wedge dx_n.
\end{equation}

\noindent{\bf Schur decomposition and a change of measure:} Any matrix $M\in g\ell(n,\mathbb{C})$ can be written as
\begin{equation}\label{schur:decomposition}
M=V(Z+T)V^*
\end{equation}
where $V$ is  unitary matrix, $T$ is  strictly upper triangular matrix and $Z$ is  diagonal matrix.  This decomposition is not unique, but uniqueness can be restored by
ordering $z_1,z_2,\ldots,z_n$ and then imposing that eigenvalues are distict and $V_{i,i}> 0$ for $i=1,2,\ldots,n$. For detailed discussion on this see (6.3.1) and thereof  on page 103 of \cite{manjubook}. We shall consider this decomposition with the condition that $V_{i,i}\geq 0$ for $i=1,2,\ldots,n$.

Integration of a function of $M$ with respect to  Lebesgue measure  is the same as integrating against the $2n^2$-form
$$2^{n^2}i^{n^2}\bigwedge_{i,j}(dM_{i,j}\wedge d\bar{M}_{i,j}).$$
We shall write the Lebesgue measure on $M$ in terms of $Z,V,T$. For this we need the Jacobian determinant formula for the change of variables from $\{dM_{i,j},d\bar M_{i,j}\}$ to $dz_i,d\bar{z}_i,\ 1\leq i\leq n;\ dT_{i,j},d\bar{T}_{i,j}, i<j,$ and $\Omega$ where $\Omega=(\omega_{i,j}):=V^*dV$.  The  Jacobian determinant formula is given by
\begin{equation}\label{jacobian:manjubook}
\bigwedge_{i,j}(dM_{i,j}\wedge d\bar{M}_{i,j})=\prod_{i<j}|z_i-z_j|^2\bigwedge_{i<j}|\omega_{i,j}|^2\bigwedge_{i}|dz_{i}|^2
\bigwedge_{i<j}|dT_{i,j}|^2,
\end{equation}
Here we have written $|\omega|^2$ for $\omega\wedge \bar \omega$. For the proof of \eqref{jacobian:manjubook} see (page 104) \cite{manjubook}.\\

\noindent{\bf RQ-decomposition of rectangular matrices and a change of measure:} 

For Jacobian determinant calculation of RQ-decompostion of rectangular matrices in real case, see (chapter 2) \cite{muirhead}. Result for complex case is discussed in Edelman's work \cite{edelman}. Since this result is needed in Jacobian determinant calculation of Generalised Schur decomposition , for the sake of completeness, we present its proof here.

   Any $m\times n$ complex matrix $M$ with $m\le n$ can be written as
\begin{eqnarray}\label{eqnRQ}
M=SU^*
\end{eqnarray}
where $S$ is a $m\times m$ upper triangular matrix and $U^*$ has orthonormal rows with non-negative real
diagonal entries. This can be done by applying Gram-Schmidt orthogonalization process to the rows of $M$ from
 bottom to top and fixing the argument of diagonal entries of $U^*$ to be zero.
The decomposition is not unique if $M$ is not of full rank or $U^*$ has at least one zero on diagonal. But we shall
omit all such $M$ matrices
(a lower dimensional set and hence also of zero Lebesgue measure).
From (\ref{eqnRQ}) we get
$$
dM=S(dU^*)+(dS)U^* .
$$
Let $V$ be such that $[U \;\;V]$ is  $n \times n$ unitary matrix.
\begin{eqnarray}\label{eq:RQ1}
\Lambda:=(dM)[U \;\;V]&=&(S(dU^*)+(dS)U^*)[U\;\; V]\nonumber
\\&=&S(dU^*)[U\;\;V]+dS[I\;\;0]\nonumber
\\&=&S\Omega+[dS\;\;0],
\end{eqnarray}
where $\Omega:=(dU^*)[U\;\;V]=(\omega_{i,j})$ and $\Lambda=(\lambda_{i,j})$ are $m\times n$ matrices of one forms. Also observe that, the leftmost $m\times m$ block of $\Omega$ is skew-Hermitian.

Integration of a function of $M$ with respect to Lebesgue measure is same as integrating against the $2nm$-form
$$
\bigwedge_{i,j}(dM_{i,j}\wedge d\bar{M}_{i,j}).
$$
There should be a factor of $2^{mn}i^{mn}$, but to make life simple, we shall omit constants  in all Jacobian determinant calculations. Where probability
measures are involved, these constants can be recovered at the
end by finding normalizing constants.

Now we want to write the Lebesgue measure on $M$ in terms of $S$ and $U$.
For this we must find the  Jacobian determinant for the change of variables from $\{dM_{i,j}, d\bar{M}_{i,j},\ 1\leq i\leq m,1\leq j \leq n \}$
to $\{dS_{i,j},\ 1\leq i,j \leq m\}$ and $\Omega$. Since for any fixed unitary matrix $W$, the transformation  $M\to MW$ is unitary
 on the set of $m\times n$ complex matrices, we have
\begin{equation}\label{M:lambda}
\bigwedge_{i,j}(dM_{i,j}\wedge d\bar{M}_{i,j})=\bigwedge_{i,j}(\lambda_{i,j}\wedge \bar{\lambda}_{i,j}).
\end{equation}
Thus we just have to find the Jacobian determinant for the change of variables from $\Lambda$ to $\Omega,dS$ and their conjugates. We write (\ref{eq:RQ1}) in the following way
\begin{eqnarray}\label{lambda:transformation}
\lambda_{i,j}&=&dS_{i,j}+\sum_{k=1}^mS_{i,k}\omega_{k,j} \nonumber\\
&=&\left\{
\begin{array}{lcr}
\displaystyle{S_{i,i}\omega_{i,j}+\left[\sum_{k=i+1}^{m}S_{i,k}\omega_{k,j}\right]}&\mbox{if }&j<i\le m.\\
& &\\
\displaystyle{dS_{i,i}+S_{i,i}\omega_{i,i}+\left[\sum_{k=i+1}^{m}S_{i,k}\omega_{k,j}\right]}&\mbox{if }&i=j.\\
& &\\
\displaystyle{dS_{i,j}+\left[\sum_{k=i}^{m}S_{i,k}\omega_{k,j}\right]}&\mbox{if }& i<j\le m.\\
& &\\
\displaystyle{S_{i,i}\omega_{i,j}+\left[\sum_{k=i+1}^{m}S_{i,k}\omega_{k,j}\right]}&\mbox{if }& j>m.
\end{array}
\right.
\end{eqnarray}
Now we arrange  $\{\lambda_{i,j}, \bar{\lambda}_{i,j}\}$
in the ascending  order given by the following relation
$$(i,j)\leq(r,s)\;\mbox{if}\; i>r \; \mbox{or if }\; i=r\; \mbox{and}\;j\leq s.$$
Also observe that the expressions inside square brackets in \eqref{lambda:transformation} involve only those one-forms
that have already appeared before in the given ordering of one-forms $\{\lambda_{i,j}, \bar{\lambda}_{i,j}\}$. Recall that the leftmost $m\times m$ block of $\Omega$ is skew-Hermitian, that is, $\omega_{i,j}=-\bar{\omega}_{j,i}$
for $i,j\le m$. Now taking wedge products of $\lambda_{i,j}$ in the above mentioned order and  using the transformation rules given in \eqref{lambda:transformation}, and with the help of last two observations,  we get that
\begin{eqnarray}\label{eqn:RQ2}
\bigwedge_{i,j}|\lambda_{i,j}|^2&=&
\prod_{i=1}^m|S_{i,i}|^{2(n-m+i-1)}\bigwedge_{i}|dS_{i,i}+S_{i,i}\omega_{i,i}|^2\bigwedge_{i<j}|dS_{i,j}|^2
\bigwedge_{i<j}|\omega_{i,j}|^2
\nonumber\\&=&\prod_{i=1}^m|S_{i,i}|^{2(n-m+i-1)}\bigwedge_{i}|dS_{i,i}|^2\bigwedge_{i<j}|dS_{i,j}|^2
\bigwedge_{i<j}|\omega_{i,j}|^2.
\end{eqnarray}
We arrive at the last step in \eqref{eqn:RQ2}  by observing that $\omega_{k,k}\wedge_{i<j}|\omega_{i,j}|^2=0$ for any $k\le m$,
because $\{U_{n \times m}:U^*U=I,U_{i,i}>0\}$ is a smooth manifold of dimension $(2nm-m^2-m)$ and  its complement in
$\{U_{n \times m}:U^*U=I,U_{i,i}\ge 0\}$ is of measure zero and $\omega_{k,k}\wedge_{i<j}|\omega_{i,j}|^2$ is
an $(2nm-m^2-m+1)$-form.

Finally,
using \eqref{M:lambda} and \eqref{eqn:RQ2} we arrive at the following Jacobian determinant formula
\begin{equation}\label{M:jacobian}
\bigwedge_{i,j}|dM_{i,j}|^2=\prod_{i=1}^m|S_{i,i}|^{2(n-m+i)-1}\bigwedge_{i\leq j}|dS_{i,j}|^2
\bigwedge_{i<j}|\omega_{i,j}|^2.
\end{equation}
This Jacobian determinant formula will be main ingredient in the Jacobian determinant calculation of product of rectangular matrices.

In subsequent paragraphs, we shall denote
\begin{equation}\label{eqn:DS}\bigwedge_{i<j}|\omega_{i,j}|^2=|dH(U)| \ \mbox{and}\ \bigwedge_{i\le j}|dS_{i,j}|^2=|DS|.\end{equation}

\noindent{\bf Jacobian determinant calculation for product of rectangular matrices:}
  
  Akemann and Burda \cite{akemann} have calculated Jacobian determinant for Generalised Schur decomposition of square matrices. Here, we generalise that result to the case of rectangular matrices.
 
 Let $A_1,A_2,\ldots,A_k$ be $k$ rectangular matrices. Size of $A_i$ is $n_i\times n_{i+1}$ for
$i=1,2,\ldots,k$  and   $n_{k+1}=n_1$, $n_1=\min\{n_1,n_2,\ldots,n_k\}$. By Schur-decomposition
$$
A_1A_2\ldots A_k=U_1TU_1^*
$$
where $U_1$ is $n_1\times n_1$ unitary matrix with non negative real diagonal entries and $T$
is upper triangular matrix with diagonal entries in descending
lexicographic ordering (lexicographic order: $u+iv\leq u'+iv'$ if $u<u'$ or if $u=u'$ and $v\leq v'$).

Now by sequential application of RQ-decomposition starting from $i=1$, $U_i^*A_i=S_iU_{i+1}^*$
for $i=1,2,\ldots,k-1$ where $S_i$ is $n_1\times n_1$ upper triangular matrix , $U_{i+1}^*$ is
$n_1\times n_{i+1}$ matrix with orthonormal rows and with non-negative real diagonal entries. Let $V_i$ be such that  $[U_i\;\;V_i]$
be a $n_i\times n_i$ unitary matrix for $i=2,3,\ldots,k$. Here $V_i$ is constructed as follows. Let $e_1,e_2,\ldots,e_{n_i}$ be standard basis for $\mathbb{C}^{n_i}$. $e_{i_1}$ be the basis vector with least index, not belonging to the span of (columns of $U_i$). The unit vector gotten by normalizing the projection of $e_{i_1}$ onto orthogonal complement of (columns of $U_i$) is chosen as first column of $V_i$. Let $e_{i_2}$ be the basis vector with least index, not belonging to the span of (columns of $U_i$, first column of $V_i$). The unit vector gotten by normalizing the projection of $e_{i_2}$ onto orthogonal complement of (columns of $U_i$, first column of $V_i$) is chosen as second column of $V_i$. Proceeding this way, we get required $V_i$.

We want to write the Lebesgue  measure on $(A_1,A_2,\ldots,A_k)$ in terms of
$U_1,\ldots,U_k$, $S_1,S_2,\ldots,S_{k-1},$ $T$ and $V_2^*A_2,V_3^*A_3,\ldots,V_k^*A_k$. On the complement
of a set of measure zero, the above decompositions are unique. So we shall omit all collections of
matrices $A_1,A_2,\ldots,A_k$ which are in this set of measure zero
and then $T,S_1,S_2,\ldots,S_{k-1}$ are invertible, all possible products of $A_1,A_2,\ldots,A_k$
are of full rank and eigenvalues of $T$ are distinct. We apply the following transformations step by step to
arrive at the measure written in terms
of new variables.

\noindent{\bf Step 1:}  We first transform
$$(A_1,A_2,\ldots,A_k ) \to (X_1,X_2,\ldots,X_k)$$
where $X_i=A_i$ for $i=1,2,\ldots,k-1$ and
$$
X_k=\l[\begin{array}{c}
    A_1A_2\ldots A_{k-1}\\
    V_k^*
    \end{array}\r]A_k
$$
where $V_k^*$ is $(n_k-n_1)\times n_k$ matrix with orthonormal rows. Also the rows of $V_k^*$ are orthogonal to rows of $A_1A_2\ldots A_{k-1}$.
It is easy to see that the Jacobian determinant formula  for this transformation is given by
\begin{equation}\label{eqn:step1}
\bigwedge_{i=1}^k |DA_i|=\\det((A_1A_2\ldots A_{k-1})(A_1A_2\ldots A_{k-1})^*)^{-n_1}\bigwedge_{i=1}^k|DX_i|.
\end{equation}
\noindent{\bf Step 2:}
By applying Schur-decomposition to upper $n_1\times n_1$ block of $X_k$ we get
$$
X_k=\l[\begin{array}{c}
    U_1TU_1^*\\
    B_k
    \end{array}\r],
$$
where $B_k=V_k^* A_k$. Using \eqref{jacobian:manjubook}, the Lebesgue measure on $X_k$ can be  written in terms of $U_1,T,B_k$ as follows
\begin{equation}\label{eqn:step2}
|DX_k|=|\Delta(T)|^2|dH(U_1)||DT||DB_k|,
\end{equation}
where $|dH(U_1)|$ is Haar measure on $\mathcal{U}(n_1)/\mathcal{U}(1)^{n_1}$, $|DT|$ is the Lebesgue measure on T and
 $$
 \Delta(T)=\prod_{1\le i<j\le n_1}(T_{i,i}-T_{j,j}).
 $$
 If we denote the eigenvalues of $A_1A_2\ldots A_k$ by $z_1,z_2,\ldots,z_{n_1}$, then $\Delta(T)$ is basically equal to
 $$
 \Delta(T)=\prod_{1\le i<j\le n_1}(T_{i,i}-T_{j,j})=\prod_{1\le i<j\le n_1}(z_i-z_j).
 $$
\noindent{\bf Step 3:} Now we apply the following transformation
$$
X_1\to U_1^*X_1= S_1U_2^*
$$
where $U_1$ is as in Step 2 and second part of above equation is by RQ-decomposition of
$U_1^*X_1$. $U_2^*$ is $n_1\times n_2$ matrix with orthonormal rows and non-negative real
diagonal entries and  $S_1$ is $n_1\times n_1$ upper triangular matrix. We shall omit matrices $X_1$
for which $U_1^*X_1$ is not of full rank (this set is of measure zero). Now using \eqref{M:jacobian},
the Lebesgue measure on $X_1$ can be written  in terms of $U_2,S_1$ as follows
\begin{equation}\label{eqn:step3}
|DX_1|=J(S_1) |dH(U_2)||DS_1|
\end{equation}
where
$$
J(S_1)=\prod_{i=1}^{n_1}|S_1({i,i})|^{2(n_2-n_1+i-1)}.
$$

\noindent{\bf Step $i+2$ for $i=2,3,\ldots, k-1$:} At $(i+2)$-th step we apply the following transformation
 \begin{equation}\label{transformation_i+2}
X_i\to\l[\begin{array}{c}
    U_i^*\\
    V_i^*
    \end{array}\r]X_i=
\l[\begin{array}{c}
    S_iU_{i+1}^*\\
    B_i
    \end{array}\r],
\end{equation}
where $U_i$ is as in Step $i+1$ and  $[U_i\;\;V_i]$ is an unitary matrix.  The  second part of the above
equation is obtained by  RQ-decomposition of $U_i^*X_i$, where $U_{i+1}^*$ is $n_1\times n_{i+1}$ matrix with
orthonormal rows and non-negative real diagonal entries, and $S_i$ is $n_1\times n_1$ upper triangular matrix. Also note that $B_i=V_i^*X_i=V_i^*A_i$ for $2\leq i \leq k-1$.
We shall omit matrices $X_i$ for which $U_i^*X_i$ is not of full rank (this set is of measure zero).
Now using \eqref{M:jacobian} the Lebesgue measure on $X_i$ can be written in terms of $U_{i+1},S_i,B_i$ as
\begin{equation}\label{dsf:DS_i}
|DX_i|=J(S_i)|dH(U_{i+1})||DS_i||DB_i|
\end{equation}
where
$$
J(S_i)=\prod_{j=1}^{n_1}|S_i({j,j})|^{2(n_{i+1}-n_1+j-1)}.
$$

\noindent{\bf Step k+2:} Now we transform $T$ to $S_k$ as follows
$$
T\to S_k:=(S_1S_2\ldots S_{k-1})^{-1}T.
$$
The Jacobian determinant formula for this transformation is given by
\begin{equation}\label{eqn:stepk+2}|DT|=\prod_{i=1}^{k-1}L(S_i)|DS_k|,\end{equation} where
$$
L(S_i)=\prod_{j=1}^{n_1}|S_i(j,j)|^{2(n_1-j+1)}.
$$

Applying the above transformations in the given order, we can write Lebesgue  measure on
 $(A_1,A_2,\ldots,A_k)$ in terms of $U_1,U_2,\ldots,U_k,S_1,S_2,\ldots,S_{k},B_2,B_3,\ldots,B_k$.
  One can observe that
$$
A_1A_2\ldots A_{k-1}=U_1S_1S_2\ldots S_{k-1}U_{k}^*,\;\; U_k^*A_k=S_kU_1^*.
$$
So
\begin{equation}\label{det:A:S}
\det(A_1A_2\ldots A_{k-1})(A_1A_2\ldots A_{k-1})^*=\det(S_1S_2\ldots S_{k-1})^2.\end{equation}
Now combining \eqref{eqn:step1}, \eqref{eqn:step2}, \eqref{eqn:step3}, \eqref{dsf:DS_i}, \eqref{eqn:stepk+2} and \eqref{det:A:S}, we get that
\begin{eqnarray}\label{eqn:jacobian}
\bigwedge_{i=1}^k|DA_i|&=&|\Delta(T)|^2\prod_{i=1}^{k-1}J(S_i)L(S_i)|\det(S_i)|^{-2n_1}\bigwedge_{i=1}^k|dH(U_i)||DS_i|
\bigwedge_{i=2}^k|DB_i|\nonumber\\
&=&\prod_{1\le i<j\le n_1}|z_i-z_j|^2\prod_{i=1}^{k-1}|\det(S_i)|^{2(n_{i+1}-n_1)}\bigwedge_{i=1}^k|dH(U_i)||DS_i|
\bigwedge_{i=2}^k|DB_i|,
\end{eqnarray}
where $z_1,z_2,\ldots,z_{n_1}$ are the eigenvalues of $A_1A_2\cdots A_k$.

\begin{remark}\label{re:tran}
In fact, the above transformations say
\begin{eqnarray*}
A_1&=&U_1S_1U_2^*\\
A_2&=&U_2S_2U_3^*+V_2B_2\\
&\vdots &\\
A_{k-1}&=&U_{k-1}S_{k-1}U_{k}^*+V_{k-1}B_{k-1}\\
A_{k}&=&U_{k}S_{k}U_{1}^*+V_{k}B_{k}.
\end{eqnarray*}
Following \textbf{steps} $1,2,\ldots, k+2$, one can  recover $U_1,U_2,\ldots,U_k,S_1,S_2,\ldots,S_{k}$ and $B_2,B_3,\ldots,B_k$ from $A_1,A_2,\ldots,A_k$.
\end{remark}

\begin{remark}\label{re:square}
Observe that if $A_1,A_2,\ldots,A_k$ are square matrices, then (\ref{eqn:jacobian}) takes the following form
\begin{equation}\label{eqn:square}
\bigwedge_{i=1}^k|DA_i|=\prod_{1\le i<j\le n_1}|z_i-z_j|^2\bigwedge_{i=1}^k|dH(U_i)||DS_i|
\end{equation}
where $|dH(U_i)|$ and $|DS_i|$ are as in \eqref{eqn:DS}.
\end{remark}

\noindent{\bf Discussion on QR-decomposition:}
QR-decomposition can be thought of as polar decomposition
for matrices. Any matrix $M\in \mathcal{M}_n$ can be written as
$$
M=QR
$$
where $Q$ is unitary matrix and $R$ is upper triangular matrix with non negative diagonal entries. Then
$$
M_j=\sum_{i=1}^jQ_iR_{i,j}
$$
where $M_j$ and $Q_j$ are $ j$-th columns of $M$ and $Q$ respectively. We would like to write Lebesgue measure on $M$ in terms of Haar measure on $Q$ and Lebesgue measure on $R$.
Since $M_{1}=R_{1,1}Q_1$,  so Lebesgue measure on $M_1$ is  given by
$$|DM_1|=R_{1,1}^{2n-1}dR_{1,1} d\sigma_{\mathbb{T}^n}(Q_1)$$
 where $d\sigma_{\mathbb{T}^n}$ denotes volume measure on unit sphere ($\mathbb{T}^n$) in $\mathbb{C}^n$. Once $Q_1$ is fixed, any new column $M_2$ can be written as
$$
M_2=R_{1,2}Q_1+R_{2,2}Q_2
$$
where $Q_2$ is unit vector orthogonal to $Q_1$ and $R_{2,2}\ge 0$. By unitary invariance of Lebesgue measure,  Lebesgue measure on $M_2$ can be written as
$$
|DM_2|=R_{2,2}^{2n-3}dR_{2,2}|dR_{1,2}|^2d\sigma_{\mathbb{T}^n\cap Q_1^{\perp}}(Q_2)
$$
where $Q_1^{\perp}$ is the space which is perpendicular to $Q_1$ and $d\sigma_{\mathbb{T}^n\cap Q_1^{\perp}}$ denotes volume measure on manifold ${\mathbb{T}^n\cap Q_1^{\perp}}$.
Continuing this way, Lebesgue measure on $M_i$,
$$
|DM_i|=R_{i,i}^{2(n-i+1)-1}dR_{i,i}|dR_{1,i}|^2|dR_{2,i}|^2\cdots|dR_{i-1,i}|^2d\sigma_{\mathbb{T}^n\cap \{Q_1,Q_2,\ldots,Q_{i-1}\}^{\perp}}(Q_i).
$$
Therefore
\begin{eqnarray*}
|DM|&=&\prod_{i=1}^n|DM_i|
\\&=&\l[\prod_{i=1}^nR_{i,i}^{2(n-i+1)-1}dR_{i,i}\r]\l[\prod_{i<j}|dR_{i,j}|^2\r]
\l[\prod_{i=1}^nd\sigma_{\mathbb{T}^n\cap \{Q_1,Q_2,\ldots,Q_{i-1}\}^{\perp}}(Q_i)\r].
\end{eqnarray*}
We can see that measure on $Q$ given by
$$
\l[\prod_{i=1}^nd\sigma_{\mathbb{T}^n\cap \{Q_1,Q_2,\ldots,Q_{i-1}\}^{\perp}}(Q_i)\r]
$$
 is Haar measure on unitary group $\mathcal{U}(n)$  $\l(dH_{\mathcal{U}(n)}(Q)\r)$.
 So, finally we have
 \begin{equation}\label{eqn:qrdecomposition}
|DM|=\l[\prod_{i=1}^nR_{i,i}^{2(n-i+1)-1}dR_{i,i}\r]\l[\prod_{i<j}|dR_{i,j}|^2\r]
|dH_{\mathcal{U}(n)}(Q)|.
\end{equation}

\noindent{\bf Discussion on QR-decomposition for $\mathcal{N}_{m,n}$:}
First recall from \eqref{n_m,n} that $\mathcal{N}_{m,n}=\{Y\in \mathcal{M}_n:Y_{i,j}=0,1\le j<i\le m \}$.    Any matrix $M\in \mathcal{N}_{m,n}$ can be written as
$$
M=QR
$$
where $Q$ is unitary matrix in $\mathcal{V}$ and  $R$ is upper triangular matrix with non negative diagonal entries. Then
$$
M_j=\sum_{i=1}^jQ_iR_{i,j}
$$
where $M_j$ and $Q_j$ are $j$-th columns of $M$ and $Q$ respectively. We would like to write Lebesgue measure on $M$ in terms of Haar measure on $Q$ and Lebesgue measure on $R$.

Note $M_{1}=R_{1,1}Q_1$ where $Q_1$ is unit vector orthogonal to $e_2,e_3,\ldots,e_m$.  So Lebesgue measure on $M_1$, $|DM_1|=R_{1,1}^{2(n-m+1)-1}dR_{1,1}
d\sigma_{\mathbb{T}^n\cap\{e_2,\ldots,e_m\}^\perp}(Q_1)$
where $e_1,e_2,\ldots,e_n$ are standard basis vectors in $\mathbb{C}^n$ and  $d\sigma_{\mathbb{T}^n\cap\{e_2,\ldots,e_m\}^\perp}$ denotes volume measure on manifold $\mathbb{T}^n\cap\{e_2,\ldots,e_m\}^\perp$ in $\mathbb{C}^n$. Once $Q_1$ is fixed, second column $M_2$ can be written as
$$
M_2=R_{1,2}Q_1+R_{2,2}Q_2
$$
where $Q_2$ is unit vector orthogonal to $Q_1,e_3,\ldots,e_m$, and $R_{2,2}\ge 0$. By unitary invariance of Lebesgue measure,  Lebesgue measure on $M_2$ can be written as
$$
|DM_2|=R_{2,2}^{2(n-m+1)}dR_{2,2}|dR_{1,2}|^2d\sigma_{\mathbb{T}^n\cap\{Q_1,e_3,\ldots,e_m\}^\perp}(Q_2).
$$
Continuing this way, Lebesgue measure on $M_i\;(i< m)$,
$$
|DM_i|=R_{i,i}^{2(n-m+1)-1}dR_{i,i}|dR_{1,i}|^2|dR_{2,i}|^2\cdots|dR_{i-1,i}|^2d\sigma_{\mathbb{T}^n\cap \{Q_1,\ldots,Q_{i-1},e_{i+1},\ldots,e_m\}^{\perp}}(Q_i).
$$
Lebesgue measure on $M_i\;(i\ge m)$,
$$
|DM_i|=R_{i,i}^{2(n-i+1)-1}dR_{i,i}|dR_{1,i}|^2|dR_{2,i}|^2\cdots|dR_{i-1,i}|^2d\sigma_{\mathbb{T}^n\cap \{Q_1,\ldots,Q_{i-1}\}^{\perp}}(Q_i).
$$
Therefore
\begin{eqnarray*}
|DM|&=&\prod_{i=1}^n|DM_i|
\\&=&\l[\prod_{i=1}^{m-1}R_{i,i}^{2(n-m+1)-1}dR_{i,i}\r]\l[\prod_{i=m}^nR_{i,i}^{2(n-i+1)-1}dR_{i,i}\r]
\l[\prod_{i<j}|dR_{i,j}|^2\r]
\\&&\l[\prod_{i=1}^{m-1}d\sigma_{\mathbb{T}^n\cap \{Q_1,\ldots,Q_{i-1},e_{i+1},\ldots,e_m\}^{\perp}}(Q_i)\r]\l[\prod_{i={m}}^nd\sigma_{\mathbb{T}^n\cap \{Q_1,Q_2,\ldots,Q_{i-1}\}^{\perp}}(Q_i)\r].
\end{eqnarray*}
We can see that measure given by
$$
\l[\prod_{i=1}^{m-1}d\sigma_{\mathbb{T}^n\cap \{Q_1,\ldots,Q_{i-1},e_{i+1},\ldots,e_m\}^{\perp}}(Q_i)\r]\l[\prod_{i={m}}^nd\sigma_{\mathbb{T}^n\cap \{Q_1,Q_2,\ldots,Q_{i-1}\}^{\perp}}(Q_i)\r]
$$
on $Q$ is Haar measure on   $\mathcal{V}$  $\l(dH_{\mathcal{V}}(Q)\r)$.
 So, finally we have
 \begin{eqnarray}\label{qr1}
|DM|
&=&\l[\prod_{i=1}^{m-1}R_{i,i}^{2(n-m+1)-1}dR_{i,i}\r]\l[\prod_{i={m}}^nR_{i,i}^{2(n-i+1)-1}dR_{i,i}\r]\nonumber
\\&&\l[\prod_{i<j}|dR_{i,j}|^2\r]
|dH_{\mathcal{V}}(Q)|.
\end{eqnarray}

\noindent{\bf  Discussion on manifold:}
In this part we state a useful formula (Co-area formula) on manifold.
Before stating the Co-area formula we need to introduce some notation.
Fix a smooth map $f:M\to N$ from an manifold of dimension $n$ to a manifold of dimension $k$. We denote  derivative of $f$ at $p\in M$  by
$$D_p(f):\mathbb{T}_p(M)\to \mathbb{T}_{f(p)}N.$$
We denote
$$M_{reg}:=  \mbox{ Set of regular points of } f,$$
$$J(D_p(f)):= \mbox{Generalized determinant of }
D_p(f),$$
$$\rho_M:= \mbox{
volume measure  on }M.$$
\begin{theorem}[{\bf Co-area formula}]\label{coarea}
With notation and setting as above, let $\phi $ be any non-negative Borel-measurable function on $M$. Then
\begin{enumerate}
\item The function $p\mapsto J(D_p(f)) $ on M is Borel-measurable.

\item The function $q\mapsto \phi(p)d\rho_{reg}\cap f^{-1(q)}(p)$ on N is Borel-measurable.

\item The integral formula:
\begin{equation}\label{eqn:coarea}
\int_M \phi(p)J(D_p(f))d\rho_{M}(p)=\int_N\l(\int\phi(p) d\rho_{M_{reg}\cap f^{-1}(q)}(p)\r)d\rho_N(q)
\end{equation}
holds.
\end{enumerate}
\end{theorem}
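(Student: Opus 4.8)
The plan is to reduce to the Euclidean setting and then to an elementary linear-algebraic identity, following the classical proof of the co-area formula in geometric measure theory. First I would localize: both sides of \eqref{eqn:coarea} are countably additive in $M$, so by using a countable atlas of $M$ together with a smooth partition of unity it suffices to treat $\phi$ supported in a single chart of $M$, and one may likewise work in a single chart of $N$, because under a $C^1$ diffeomorphism $\Theta$ of $N$ the factor $J(D_pf)$ and the measure $d\rho_N$ both acquire the same extra factor $|\det D\Theta|$ evaluated at $f(p)$, while the fibre measures $\rho_{M_{reg}\cap f^{-1}(q)}$ are unchanged. Thus I may assume $M$ is open in $\mathbb{R}^n$, $N=\mathbb{R}^k$ carries Lebesgue measure, $\phi$ is bounded Borel with compact support in $M$, and (after restricting $f$) that $f$ is smooth on $M$. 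If $n<k$ then $J(D_pf)\equiv 0$, so the left side vanishes, while by Sard's theorem $f(M)$ is Lebesgue-null in $\mathbb{R}^k$, so the right side vanishes; hence I assume $n\ge k$.

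Second, I would establish the linear case: for a linear map $L:\mathbb{R}^n\to\mathbb{R}^k$ and any nonnegative Borel $\phi$, the identity $\int_{\mathbb{R}^n}\phi(x)\,J(L)\,dx=\int_{\mathbb{R}^k}\big(\int_{L^{-1}(y)}\phi\,d\rho_{L^{-1}(y)}\big)\,dy$ holds. If $L$ is not onto then $J(L)=0$ and $L(\mathbb{R}^n)$ is a proper subspace of measure zero, so both sides are zero. If $L$ is onto, split $\mathbb{R}^n=(\ker L)^{\perp}\oplus\ker L$; after an orthogonal change of coordinates one has $L(u,v)=Au$ with $A$ an invertible $k\times k$ matrix satisfying $|\det A|=\sqrt{\det(LL^{*})}=J(L)$, each fibre $L^{-1}(y)=\{A^{-1}y\}\times\mathbb{R}^{n-k}$ is isometric to $\mathbb{R}^{n-k}$, and the identity is precisely Fubini's theorem combined with the substitution $y=Au$.

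The substantive step is to pass from $L=D_pf$ to $f$ itself on the regular set $M_{reg}=\{p:\operatorname{rank}D_pf=k\}$, exploiting that a $C^1$ map is locally uniformly close to its linearization. I would cover $M_{reg}$ by countably many small Borel pieces on each of which, for a prescribed $\eta>0$, $f$ differs in $C^1$-norm by at most $\eta$ from a fixed surjective affine map $p\mapsto Lp+c$; apply the linear identity above on each piece; compare the true fibres of $f$ with those of the affine map, estimating the discrepancy in $(n-k)$-dimensional volume by $O(\eta)$ using that the fibres of a submersion are $(n-k)$-submanifolds whose induced volume is controlled by the linearization; then sum over the pieces and let $\eta\downarrow 0$. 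This yields \eqref{eqn:coarea} with $M$ replaced by $M_{reg}$.

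Finally I would dispose of the critical set and record measurability. On $M\setminus M_{reg}$ one has $\operatorname{rank}D_pf<k$, hence $\det(D_pf\,D_pf^{*})=0$, so deleting critical points leaves the left-hand integral unchanged; and by Sard's theorem $f(M\setminus M_{reg})$ is Lebesgue-null in $\mathbb{R}^k$, so for almost every $q$ the fibre $f^{-1}(q)$ lies entirely in $M_{reg}$ and is a genuine $(n-k)$-submanifold, which is exactly what makes the right-hand integrand defined and shows the deleted set contributes nothing. Assertion (1) is immediate because $p\mapsto J(D_pf)$ is continuous for smooth $f$, and assertion (2) follows by a monotone-class argument from the case $\phi=\one_B$ with $B$ an open box, where it reduces to the Euclidean identity just established. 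I expect the one genuinely hard point to be the third step --- comparing the fibres of $f$ with those of its linearization, i.e.\ controlling how the $(n-k)$-dimensional volume on $f^{-1}(q)$ varies --- which is the technical crux of the co-area formula (the covering-lemma and area-formula estimates); the localization, the linear case, Sard's theorem, and the measurability assertions are comparatively routine.
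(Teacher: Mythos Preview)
Your proof sketch is a faithful outline of the classical geometric-measure-theory argument for the co-area formula, and the places you flag as routine versus substantive are correctly identified. However, you should be aware that the paper does not actually prove this theorem: it simply states the co-area formula in the Appendix and refers the reader to \cite{andersonbook} (p.~442) for a proof. So there is no ``paper's own proof'' to compare against; the authors treat it as a quoted black box used in the proof of the special case \eqref{unitary:specialcase} and of Theorem \ref{thm:result3}.

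Given that, your sketch goes well beyond what the paper requires. If you want to match the paper's level of detail, a single sentence citing a standard reference would suffice. If instead you intend to supply an independent proof, then your outline is sound, but step three --- controlling the $(n-k)$-dimensional Hausdorff measure of the true fibres of $f$ by that of the fibres of its affine approximation, uniformly in the $C^1$-small perturbation --- is where all the work lies and would need to be fleshed out (typically via the area formula for Lipschitz maps and a Vitali-type covering argument, as in Federer or Evans--Gariepy); as written it is a signpost rather than an argument.
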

We shall use this formula in the proof of Theorem \ref{thm:truncatedunitary}. For the proof of Co-area formula see \cite{andersonbook}  (pp. 442).\\

\noindent{\bf Acknowledgement:}
We would like to thank  Manjunath Krishnapur for many valuable
suggestions and discussions.

\end{document}